\documentclass[11pt]{article}
\usepackage{graphicx} 
\usepackage{svg}

\usepackage{tikz-cd}
\usepackage[utf8]{inputenc}
\usepackage{amsmath}
\usepackage{amsfonts}
\usepackage{amssymb}
\usepackage{amsthm}
\usepackage{booktabs}
\usepackage{latexsym}
\usepackage{ytableau}
\usepackage{tikz}
\usepackage[
    backend=biber,
    style=numeric,
  ]{biblatex}
\usepackage{float}
\usepackage{mathdots}
\usepackage[multiple]{footmisc}

\newtheorem{thm}{Theorem}
\newtheorem{prop}{Proposition}

\newtheorem{defin}{Definition}
\newtheorem{rmk}{Remark}

\newtheorem{cor}{Corollary}
\newtheorem{lem}{Lemma}

\newtheorem{notn}{Notation}

\DeclareMathOperator{\CC}{\mathbb{C}}
\DeclareMathOperator{\NN}{\mathbb{N}}
\DeclareMathOperator{\ZZ}{\mathbb{Z}}
\DeclareMathOperator{\QQ}{\mathbb{Q}}
\DeclareMathOperator{\HH}{\mathbb{H}}
\DeclareMathOperator{\GL}{\mathbf{GL}}

\DeclareMathOperator{\Sym}{Sym}
\DeclareRobustCommand{\binomer}{\genfrac\langle\rangle{0pt}{}}

\title{Quasiautomorphic forms are isomorphic to vector-valued automorphic forms}
    
\author{Michael Andrew Henry\footnote{The author acknowledges the assistance of Cameron Franc (McMaster University) with the representation theoretic arguments. We also thank Felipe Espreafico Guelerman (Sorbonne Universit\'{e}) for many conversations and clarifications about the exciting enumerative results from mathematical physics and enumerative geometry.}}

\date{\today}

\begin{document}

\maketitle

\begin{abstract}
    We utilize the structure of quasiautomorphic forms over a Hecke triangle group to define a mapping from a quasiautomorphic form to a vector-valued automorphic form (vvaf). This kind of vvaf we call a Hecke vector-form. First we supply a proof of the functional equations that hold for Hecke vector-forms modulo the group generators. Then, utilizing the multiplier system for these Hecke vector-forms, we prove the opposite direction and complete the bijection. Since the modular group is a special instance of the Hecke triangle groups, our results hold for quasimodular forms.  
\end{abstract}

{Keywords: Automorphic form; Quasiautomorphic form; extremal quasiautomorphic form; Hecke triangle group; Modular form; Quasimodular form; Hecke vector-form}

\section{Introduction}
A \textbf{modular form} $ f(z) $ is a holomorphic function defined on the upper half-plane, $ \mathbb{H}  := \{z\in \mathbb{C}: \Im(z)>0 \} $, such that it obeys a simple functional equation on elements $ \frac{az + b}{cz+d} \in \HH $ or what is $ z \in \mathbb{H}$ under the action of the \textbf{full modular group}, $ \mathbf{SL}_2(\mathbb{Z}) $. These functions with variations on the underlying matrix group have been studied as far back as Riemann and Poincaré, where they were understood to be closely linked to the elliptic functions of Eisenstein, Jacobi, and Weierstrass \cite{Roy2017}. The interest in these functions has only increased over time, where they see diverse applications across mathematics, but especially in number theory, combinatorics, and mathematical physics. A lot of the interest in modular forms has come by studying the arithmetic of the coefficients of series expansions of particular modular forms — some of the astonishing work of Ramanujan is a prime example — where the integrality and combinatorial richness of those coefficients has proven decisive.

\textbf{\emph{Quasi}modular forms}, on the other hand, had their definition set out only very recently in the work \cite{kanekoZagier1995}. The prototypical example of a quasimodular form is the weight $ w = 2 $ Eisenstein series $ E_2 $, which fails to transform as a modular form (see \eqref{eq:E2FunctionalEqs}), but, when taken together with Eisenstein series $ E_4 $ and $ E_6 $, generates a ring closed under the differential operator $ \vartheta := z\frac{d}{dz} $ (a result due to Ramanujan). The aim of \cite{kanekoZagier1995} was to put theory behind and generalize a function appearing in the famous work of Dijkgraaf \cite{dijkgraaf1995}. It is less well-known, but in \cite{Douglas1995} the function can be found in a different form (as explained in \cite{LandoZvonkin2004}, p. 425). Mysteriously, this function shared many characteristics of a modular form, yet it deviated just slightly. For concreteness, the \textbf{Dijkgraaf-Douglas function} is given as a Fourier series (in $ q := e^{2 \pi i z} $) by
    \begin{eqnarray*}
        D_{6, 3}(z) &=& q + 8q^2 + 30q^3 + 80q^4 + 180q^5 +  336q^6 + 620q^7 + \cdots \\
        &=& \frac{1}{6} \sum_{n=1}^{\infty} (n\sigma_3(n) -n^2 \sigma_1(n))q^n,
    \end{eqnarray*}
where the latter identity is shown in \cite{kaminakaKato2021} if as usual $ \sigma_{m}(n) := \sum_{d|n} d^m $ (also see A$126858$; the paper \cite{mazur2004} together with the appendix of \cite{LandoZvonkin2004} gives a captivating analysis of the role played by enumerative topology). By \cite{kanekoZagier1995}, $ D_{6,3} $ is a weight $ w = 6 $ and depth $ r = 3 $ quasimodular form where it has relevance in the theory of mirror symmetry (of mathematical physics) as a generating function, its coefficients enumerating the generically ramified coverings of a genus $ g = 1 $ Riemann surface by Riemann surfaces of genus $ g = n  $ (\cite{LandoZvonkin2004}, 425). 

In regards to its appearance as a generating function, $ D_{6,3} $ was called extremal in \cite{kanekoKoike2006} because the series can be put in the form $ q^{d-1} + \sum_{k=0} a_k q^{d+k} $ where $ d $ is the dimension of the  vector-space where the function resides. This feature turned out to be a highly tractable notion and since then all extremal quasimodular forms with integral coefficients have been determined; it is known that none exist when the depth $ r $ exceeds $ 3 $ \cite{kaminakaKato2021}, \cite{nakaya2024}, making $ D_{6,3}$ even more remarkable. The paper \cite{nakaya2024} builds past work and gives many explicit formulas for coefficients of extremal quasimodular forms and also congruence relations in the spirit of Ramanujan's work on integer partitions \cite{Berndt2006}. There has also been progress on determining extremal quasiautomorphic forms associated to subgroups of $ \mathbf{SL}_2(\mathbb{Z}) $ which have integer coefficients, but this work is still ongoing (see discussion with references in \cite{nakaya2024}). 

To motivate the main theorem of our paper, recall that a \textbf{Hecke triangle group} $ \mathfrak{t}_\mu $ is generated in $ \mathbb{H} $ by two maps (or generators)
    \begin{eqnarray}\label{eq:HeckeGenerators}
          T_{\mathfrak{t}_\mu}= T := 
            \begin{pmatrix} 
                1 & \varpi_\mu \\ 0 & 1 
            \end{pmatrix}, \qquad
        S_{\mathfrak{t}_\mu} =S := 
            \begin{pmatrix}
                0 & -1 \\ 1 & 0 
            \end{pmatrix}, \qquad
    \end{eqnarray}
(when $ \varpi_{\mu} := 2 \cos \frac{\pi}{\mu} $) and the operation is matrix multiplication (\cite{beardon1983}). The group presentation with respect to $T$ and $S$ is written
    \begin{eqnarray*}
        \mathfrak{t}_\mu = \langle T, S : S^2 = (TS)^\mu = 1 \rangle,
    \end{eqnarray*}
and from this it is clear that the full modular group is just the Hecke triangle group $ \mathfrak{t}_3 = (2,3,\infty) $. It is known that for each Hecke triangle group $\mathfrak{t}_\mu $, in exact analogy with the modular case, we may define functions called automorphic forms that satisfy
    \begin{eqnarray*}
        {g\left(\frac{az +b}{cz+d}\right)} = {(cz+d)^{2k}} g(z), \quad \text{whenever } \begin{pmatrix}
            a & b\\
            c & d
        \end{pmatrix}
        \in \mathfrak{t}_\mu 
    \end{eqnarray*}
for  $ 2 \leq k \in \mathbb{N} $ \cite{doranEtAl2013}, \cite{lehner1964}. Yet, somehow these automorphic functions have enjoyed nothing like the attention given to modular forms. We suspect that a reason for this neglect is that, if $ g $ is an automorphic function associated to $ \mathfrak{t}_\mu $ when $ \mu \not \in \{3, 4, 6\} $ (that is, $ \mathfrak{t}_\mu $ is not arithmetic \cite{Takeuchi1977}), then it was proven in \cite{wolfart1983} in the early 1980's that the coefficients in the series development of $ g $ are irrational numbers. Thus, much of the interest and success of modular forms, which has come in so many cases by studying the coefficients, cannot be carried over to irrational numbers. 

The situation is past due for a reappraisal, however, since recent mathematical developments are legitimizing non-integer counting functions. Specifically, in some very challenging areas of mathematics, such as string theory and enumerative geometry, the landscape of enumerative combinatorics is being developed in unexpected diretions: more and more studies are finding ways of attaching discrete combinatorial meaning to rational numbers. The most famous examples are probably the so-called Gromov-Witten invariants. Originating in string theory, these numerically rational invariants, while hard to define simply, are often calculated by determining coefficients in infinite series, just as with so many examples from classical combinatorics \cite{wilf1994}. The reader may consult \cite{rose2014} and \cite{yui2011} for examples of generating functions with Gromov-Witten invariants and can see ch. 4 in \cite{KockVainsencher2007} for a definition. Even more dramatically though, the recent work \cite{walcher2012} in enumerative geometry assigns discrete combinatorial meaning to irrational numbers that appear in polylogarithm expansions. The techniques appear to the author (who is not an expert) to descend from the counting revolution that had already begun taking place in physics. These advances compel us to return to automorphic forms on other groups such as the non-arithmetic $ \mathfrak{t}_\mu $ as potentially rich sources of combinatorial information that have been overlooked. The present article is progress in this direction.

Having provided some context, we arrive at the main question motivating this article: what exactly is a quasiautomorphic form? We will answer this completely by constructing an isomorphism between quasiautomorphic forms and some much better understood objects. Precisely, for $ U $ a quasiautomorphic form, we determine a bijective mapping $ U\, \leftrightarrow\, \mathbf{F}_U$ such that $ \mathbf{F}_U(z) $ is a vector-valued automorphic form. In our terminology, this function $\mathbf{F}_U(z) $ is a called a \textbf{Hecke vector-form}. Now we quote the main result of the article:
    \begin{thm}[Isomorphism theorem]
        The function $ U_{\mathfrak{t}_\mu, w, r}(z) = U $ is a quasiautomorphic form (with respect to $ \mathfrak{t}_\mu $) of weight $ w = 2k $ and depth $ 0 \leq r \leq \frac{w}{2}  $ if and only if there is a vector-valued automorphic form $ \mathbf{F}_U(z) $ such that
            \begin{eqnarray*}
                    \mathbf{F}_U(Tz) &=& \varepsilon_{r}(T)\,\mathbf{F}_U(z), \\
                    \frac{\mathbf{F}_U(Sz)}{z^{w-r}} &=& \varepsilon_r(S)\, \mathbf{F}_U(z)  
            \end{eqnarray*}
        for generators $ T_{\mathfrak{t}_\mu} = T $ and $  S_{\mathfrak{t}_\mu } =  S $ of $ \mathfrak{t}_\mu $, where the multiplier system $\varepsilon_r $ is given by natural, simple, and integral $ (r+1) \times (r+1) $ matrices.
    \end{thm}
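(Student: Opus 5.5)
The plan is to work from the standard structure theory of quasiautomorphic forms on $\mathfrak{t}_\mu$. A quasiautomorphic form $U$ of weight $w$ and depth $r$ comes with holomorphic functions $U_0=U, U_1,\dots,U_r$ such that for every $\gamma=\left(\begin{smallmatrix} a&b\\ c&d\end{smallmatrix}\right)\in\mathfrak{t}_\mu$
\begin{eqnarray*}
(cz+d)^{-w}\,U(\gamma z)=\sum_{j=0}^{r} U_j(z)\left(\frac{c}{cz+d}\right)^{j}.
\end{eqnarray*}
Each $U_j$ is itself quasiautomorphic of weight $w-2j$ and depth $r-j$, and its own transformation law is obtained by differentiating, or equivalently by using the cocycle identity for $\gamma\mapsto c/(cz+d)$. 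From these data I would define
\begin{eqnarray*}
\mathbf{F}_U(z)=\bigl(F_0(z),\dots,F_r(z)\bigr)^{t},\qquad F_m(z)=\sum_{j=0}^{m}\binom{r-j}{m-j}\, z^{m-j}\,U_j(z),
\end{eqnarray*}
possibly with a sign or normalization to be fixed later. The idea is to "complete" the depth polynomial. Put $P_U(z,X)=\sum_j U_j(z)X^j$. The combination $\sum_j U_j(z)(X-z)^{r-j}$, read as a polynomial in an auxiliary variable, transforms like a weight $w-r$ form tensored with $\Sym^r$ of the standard representation. Expanding it in powers of $X$ produces exactly the coordinates $F_m$.

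\emph{Forward direction.} It suffices to check the two generators, since the presentation $S^2=(TS)^\mu=1$ together with the cocycle property then gives the law for all of $\mathfrak{t}_\mu$.
\begin{itemize}
\item For $T$ we have $c=0$, so every $U_j$ is $\varpi_\mu$-periodic. Then $F_m(z+\varpi_\mu)$ expands binomially in $z$, and comparing coefficients gives $\varepsilon_r(T)$ as a lower-triangular binomial matrix with entries $\binom{\cdot}{\cdot}\varpi_\mu^{\,i}$. This is essentially $\Sym^r$ of $T$.
\item For $S$ we have $c/(cz+d)=1/z$. Substituting the transformation laws of all the $U_j$ at $Sz=-1/z$ and collecting powers of $z$ should give $F_m(Sz)=z^{w-r}\sum_n \varepsilon_r(S)_{mn}F_n(z)$, with $\varepsilon_r(S)$ the anti-diagonal matrix of signs $\pm1$, i.e.\ $\Sym^r(S)$.
\end{itemize}
This is where the integrality claim comes from. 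For $T$ the entries are integers only when $\varpi_\mu\in\ZZ$, so I would likely need to rescale coordinates by powers of $\varpi_\mu$, or read "integral" as holding over $\ZZ[\varpi_\mu]$.

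\emph{Converse.} Given a vector $\mathbf{F}$ with those two laws, the plan is to invert the triangular change of variables and recover $U_j$ from $F_0,\dots,F_j$ recursively. The first coordinate gives $U_0=F_0$ directly, up to normalization. Reversing the generator computations shows that $P(z,X)$ built from the recovered $U_j$ satisfies the quasiautomorphy relation for $T$ and $S$. The cocycle identity then propagates it to all words in $T,S$. Holomorphy at the cusp carries over from the vvaf growth condition, after converting the polynomial growth in $z$ back.

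\textbf{Main obstacle.} I expect the hard step to be the $S$-computation: getting the exact binomial bookkeeping so that the mixed powers of $z$ from the $U_j$ transformation laws cancel into a constant matrix. I would first verify the cases $r=1$ (the $E_2$-type law) and $r=2$ by hand to fix the normalizations of $F_m$. Then I would prove the general case by the generating polynomial $\sum_j U_j(z)(X-z)^{r-j}$, using the fact that $X-\gamma z$ transforms as $(X(cz+d)\ldots)$. This turns the identity into the invariance of a single polynomial and avoids coordinatewise sums.
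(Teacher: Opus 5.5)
Your proof is correct in outline, but it takes a different route from the paper in both directions.

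\textbf{Forward direction.} The paper starts from $U=\sum_k H_{w-2k}E_2^k$. It gets the $T$-law from $\mathbf F_U=e^{zA_r}\mathbf G_U$ with $\mathbf G_U$ periodic, and gets the $S$-law for $S$ alone by coordinatewise binomial manipulations (Proposition \ref{prop:gUnderS} and Corollary \ref{cor:convolvUnderS}). Expanding $\bigl(E_2+C\tfrac{c}{cz+d}\bigr)^k$ shows that the paper's $g_\ell$ equals $\binom{r}{\ell}^{-1}U_\ell$. Hence your $F_m=\binom{r}{m}f_m$, so your vector is $D\,\mathbf F_U$ with $D:=d_r\bigl(\binom{r}{j}\bigr)$. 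Your $S$-matrix is the paper's anti-diagonal sign matrix, and your $T$-matrix, with entries $\binom{r-n}{m-n}\varpi_\mu^{m-n}$, is $D\,e^{\varpi_\mu A_r}D^{-1}$. The generating polynomial treats every $\gamma$ at once and exhibits the multiplier directly as $\Sym^r$ of the standard representation; the paper has to verify this separately in Part II.

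\textbf{Converse.} Part II of the paper only checks that $T\mapsto e^{\varpi_\mu A_r}$ and $S\mapsto$ the sign matrix respect $S^2=(TS)^\mu$ and give $\Sym^r\varepsilon_1$. It never recovers $U$ from a given vector-form, so your inversion-and-reversal is the actual argument for that direction. Reading ``integral'' as ``entries in $\ZZ[\varpi_\mu]$'' is the right call. The paper's own $e^{\varpi_\mu A_r}$ has entries $\binom{m}{n}\varpi_\mu^{m-n}$, and rescaling by powers of $\varpi_\mu$ only moves those powers into $\varepsilon_r(S)$.

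\textbf{Two points need fixing.}

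First, the polynomial must be $\Phi(z,X)=\sum_j U_j(z)(z-X)^{r-j}$, not $\sum_j U_j(z)(X-z)^{r-j}$. Use $\gamma z-\gamma X=(z-X)/\bigl((cz+d)(cX+d)\bigr)$ and $U_j(\gamma z)=(cz+d)^{w-2j}\sum_{i\ge j}\binom{i}{j}U_i(z)\bigl(\tfrac{c}{cz+d}\bigr)^{i-j}$. The coefficient of $U_i(z)$ then involves $\sum_{j\le i}\binom{i}{j}\bigl(c(z-X)\bigr)^{i-j}(cX+d)^j=(cz+d)^i$, which yields $(cX+d)^r\Phi(\gamma z,\gamma X)=(cz+d)^{w-r}\Phi(z,X)$. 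With $X-z$ that sum becomes $(2cX-cz+d)^i$, and the identity already fails for $U=E_2$. Your $F_m$ are, up to the sign $(-1)^{r-m}$, the coefficients of $X^{r-m}$ in the correct $\Phi$, so they stand as written. The reversal is legitimate because $\{(z-X)^{r-j}(cX+d)^j\}_{j=0}^{r}$ is a basis of the polynomials of degree $\le r$ in $X$ whenever $cz+d\neq0$.

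Second, reversing only gives a holomorphic $U_0=F_0$ satisfying the transformation-law form of quasiautomorphy. The paper instead defines quasiautomorphic forms as $\sum_k H_{w-2k}E_2^k$, so you need an induction on depth to close the gap. The $j=r$ law makes $U_r$ automorphic of weight $w-2r$, and $U_0-C^{-r}U_rE_2^r$ then has depth $<r$ since $C\neq0$. For this the $U_j$ must be holomorphic at $i\infty$. So the cusp condition you impose on $\mathbf F$ has to be holomorphy at $i\infty$ of the periodic vector $(U_0,\dots,U_r)$ obtained by inverting your triangular change of variables (this follows from moderate growth of $\mathbf F$ in a vertical strip). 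Merely having a Fourier expansion would not rule out negative powers of $q$.
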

It turns out that our multiplier system $ \varepsilon_r $ is generic in that it is independent of $ U $; only $ \varepsilon_r(T) $ takes the parameter value $ \varpi_\mu = 2 \cos \frac{\pi}{\mu} $ which appears in the generator $ T $.    

Our Theorem now allows quasiautomorphic forms to fall under the general methods of Fuchsian $n$-th order linear differential equations whose solutions are invariant with respect to $ T $ and $ S $. Conforming closely to existing convention, these might be called \textbf{automorphic linear differential equations} (or ALDEs), solutions of which are \textbf{vector-valued automorphic forms} (or vvafs) (we prefer the term vector-form). Our Theorem is then a natural and substantial strengthening of the existing, more familiar theory of \textbf{modular linear differential equations} (or MLDEs), solutions being \textbf{vector-valued modular forms} (or vvmfs). What is promising in our approach is that MLDEs already have a very large literature and thus a body of facts and techniques that predates quasiautomorphic forms considerably \cite{francMason2016}, including already known connections to vertex operator algebras and conformal field theory (CFT). All of this is now available, via our bijection, as a toolkit for the study of quasiautomorphic forms.

\section{Setup and notation}
In this section we review the essential background concepts and set most of the notation for the remainder of the article.

\subsection{Hecke triangle groups}

To begin, let us recall that if $ p, q, r \in (\NN \setminus \{1\}) \cup \{\infty\}$ satisfy
    \begin{eqnarray}\label{eq;firstKindCriterium}
        \frac{1}{p} + \frac{1}{q} + \frac{1}{r} < 1,        
    \end{eqnarray}
then the \textbf{Schwarz triangle group} $ (p,q,r) $ is the group with presentation
    \begin{eqnarray*}
        (p,q,r) := \langle\, x, y, z \;\mid\; x^p = y^q = z^r = xyz = 1\,\rangle;
    \end{eqnarray*}
in the case, say, $ r = \infty $, then we define $ 1/r := 0 $ \cite{beardon1983}. The Schwarz triangle group leads us directly to the group we treat here in that it is a subgroup:
    \begin{defin}[Hecke triangle group]
        For an integer $ \mu\geq 3$, the \textbf{Hecke triangle group} is the special Schwarz triangle group
            \begin{eqnarray*}
                \mathfrak{t}_\mu := (2, \mu, \infty).        
            \end{eqnarray*}
    \end{defin}
Recall that the generators of $\mathfrak{t}_\mu $ have already been shown at \eqref{eq:HeckeGenerators}. To connect with the triplet notation, one hyperbolic triangle of group $\mathfrak{t}_\mu $ has angles $  \frac{\pi}{2}, \frac{\pi}{\mu}, 0 $, where the last vertex is a cusp (see Figure \ref{fig:GlobCurveTriangle}). 

 \begin{figure}[H]
        \centering
        \begin{tikzpicture}[scale=.55]
 
            \draw[] (-4,0) arc (180:0:4cm and 4cm);
 
            \draw[] (0,0) ellipse (4cm and 1.3cm);
 
            \draw[] (2.2194,-1.0720) ++ (15.4:0.8720) arc (15.4:189.8:0.8720);
 
            \draw[thick, dashed] (2.15,-0.2000)
            .. controls (2.10, 0.84) and (2, 2.7) .. 
            (0.00, 4.00);
 
            \draw[] (1.6,-0.42)
            .. controls (1.58, 0.765) and (1.5, 2.325) ..
            (0.00, 4.00);
 
            \filldraw (2.25,-1.065) circle (1.5pt)
            node[below right, font=\itshape] {$0$};
 
            \filldraw (0.00, 4.00) circle (1.5pt)
            node[above right] {$i \infty$};
 
            \filldraw (1.3601,-1.2204) circle (1.5pt)
            node[below left, font=\itshape] {$-1$};
 
            \filldraw (3.0601,-0.8404) circle (1.5pt)
            node[below right, font=\itshape] {$1$};
 
            \filldraw (2.15,-0.2000) circle (1.5pt)
            node[above right, font=\itshape] {$r_1$};
 
            \filldraw (1.6,-0.45) circle (1.5pt)
            node[left, font=\itshape] {$ {r_2} $};
        \end{tikzpicture}
        \caption{Global depiction of normalized initial triangle.}
        \label{fig:GlobCurveTriangle}
    \end{figure}

A triangle together with its reflection is the \textbf{fundamental domain} for $ (2,\mu, \infty) $ (see Figure \ref{fig:LocCurveTriangle}). Apropos later considerations, it is known that a Riemann mapping function from such a hyperbolic triangle to $\HH $ is single valued (\cite{nehari1952}, p. 310).

We shall want a convenient normalization of $\mathfrak{t}_\mu$ for use with automorphic forms (and related quasiautomorphic forms) and we briefly explain this.
  
  \begin{figure}[H]
        \centering
        \begin{tikzpicture}[z=-1 cm, scale = 1. 45]
            \coordinate [label=below:\textcolor{black}{$r_1$}]  (A) at (0,1);
            \coordinate [label=below:\textcolor{black}{$r_2 $}]  (B) at (-.5,0.866);
            \coordinate [label=below:\textcolor{black}{$0$}]  (D) at (0,0);
            \coordinate [label=below:\textcolor{black}{$-1$}]  (D) at (-1,0);
            \coordinate [label=below:\textcolor{black}{$1$}]  (D) at (1,0);
            \coordinate [label=above:\textcolor{black}{$ i\infty $}]  (G) at (0, 2.4);
            \coordinate [label=right:\textcolor{black}{$ d $}]  (G) at (-0.42, 1.7);
            \draw[] (-0.5,.866) -- (-0.5,2.3);
            \draw[color=black, dashed, thin] (0,1) -- (0,2.3) ;
            \draw[] (0.5, 0.866) -- (0.5,2.3);
            \draw[] (-1.5,0,0) -- (1.5,0,0);
            \draw[] (-1,0) .. controls (-1,0.555) and (-0.555,1) .. (0,1)
            .. controls (0.555,1) and (1,0.555) .. (1,0);
        \end{tikzpicture}
        \caption{Locally, the normalized initial triangle $ d $ with vertices $r_1$ and $r_2 $, etc.}\label{fig:LocCurveTriangle}
    \end{figure}

Let the corner vertices $ r_1, r_2, $ and $ r_3 $ of the initial curvilinear triangle be
    \begin{eqnarray}\label{normalizedVertices}
            r_1 = -{e^{-\pi i/2 }}, \qquad
            r_2 = -e^ {-\pi i/\mu}, \qquad
            r_3 = \infty;
    \end{eqnarray}
this is the standard normalization of $ \mathfrak{t}_\mu $ from number theory and we assume without further mention that a Hecke triangle group is normalized this way.

\subsection{Automorphic forms}

Following closely the presentation from \cite{zemel2015}, we let 
    $$ \gamma = 
        \begin{pmatrix}
            a & b\\
            c & d
        \end{pmatrix} \in \mathfrak{t}_\mu, 
    $$ 
then the action of $ \gamma $ on $ \HH $ we denote as a juxtaposition with $ z $ as rightmost, namely
    \begin{eqnarray*}
           \gamma z := \frac{az+b}{cz+d}.
    \end{eqnarray*}
With respect to this action, an artifact occurs in the definition of an automorphic form that is called a \textbf{factor of automorphy} and defined
    \begin{eqnarray*}
        j(\gamma, z) := cz+d.
    \end{eqnarray*}
This factor of automorphy is a \textbf{cocycle} since whenever $ z \in \mathbb{H} $ and $ \gamma, \delta \in \mathfrak{t}_\mu $, the identity
    \begin{eqnarray*}
        {j(\gamma \delta, z)} = j(\delta, z)\,{j(\gamma,\delta z )} 
    \end{eqnarray*}
holds. We now have everything to present

    \begin{defin}\label{def:automorphicForm}
        An \textbf{automorphic form} $ f $ of weight $ (w,u) \in \mathbb{Z} \times \mathbb{Z} $ and representation $ \varrho: \mathfrak{t}_\mu \rightarrow \mathbf{GL}(V_\varrho) $ (where $ V_\varrho$ is the representation space of $ \varrho $) is a function $f: \HH \rightarrow \CC $ satisfying
            \begin{eqnarray*}
                f\left(\gamma z\right) = j(\gamma, z)^w {\overline{j(\gamma, z)}}^u \varrho(\gamma)f(z) \quad \text{when } \gamma \in \mathfrak{t}_\mu.
            \end{eqnarray*} 
    \end{defin}

Definition \ref{def:automorphicForm} is much more general than what we need, but we have given it for the sake of completeness (note that we will not continue at this level of generality). We assume here-forward that $ u = 0 $ and therefore we consider automorphic forms of weight $ w $, not $ (w,u) $, without further mention. A common practice is to denote the weight of the automorphic form as a subscript as in $ f_w $, a practice we will use where it makes sense. Also, as a result of our normalization of $ \mathfrak{t}_\mu $ (recall Figure \ref{fig:LocCurveTriangle}), the automorphic forms that we consider here all have representation $ \varrho(\gamma) = 1 $ or have the trivial representation. It follows also then that $ w = 2k $ holds for $ 1 \leq k \in \mathbb{N} $.

We consider here only holomorphic automorphic forms, which includes when $ z \rightarrow i \infty $. Holomorphic automorphic forms belong to a finite dimensional vector space depending on their weight $w$ and we denote this space by $ \mathcal{A}_w(\mathfrak{t}_\mu) $. Exact dimension formulas are non-trivial and known generally for automorphic forms over $ \mathfrak{t}_3 $, $\mathfrak{t}_4 $, and $ \mathfrak{t}_6 $ (the well-studied arithmetic cases). Note that the precise dimension depends on the representation $ \varrho $. Following \cite{berndtKnopp2008}, and assuming the trivial representation $ \varrho(\gamma)= 1 $ when $\gamma \in \mathfrak{t}_\mu $, then for $ w \equiv 0 \,(\text{mod } 4) $ satisfied, the formula
    \begin{eqnarray*}
        \dim \mathcal{A}_{w}(\mathfrak{t}_\mu) = \left\lfloor\frac{w}{4}\left( \frac{\mu -2 }{\mu} \right) \right \rfloor + 1
    \end{eqnarray*}
holds. For a proof see \cite{berndtKnopp2008}, beginning p. 52.

Finally, adapted from \cite{francMason2016}, we give 
    \begin{defin}
        A \textbf{vector-valued automorphic form} $ {F} $ of weight $ w $, order $n \in \NN $, and representation $ \varrho: \mathfrak{t}_\mu \rightarrow \mathbf{GL}_n (\ZZ) $ is a function $ F: \HH \rightarrow \CC^n $ satisfying:
            \begin{enumerate}
                \item $F(z)$ is holomomorphic,
                \item $F\left(\gamma z\right) = j(\gamma, z)^w \varrho(\gamma) F(z) \quad \text{when } \gamma \in \mathfrak{t}_\mu $,
                \item $ F(z) $ has a Fourier series expansion.
            \end{enumerate}
    \end{defin}

We end the subsection with a note. The term ``modular form'' we will reserve here for an automorphic form defined precisely with respect to the full modular group, $ \mathfrak{t}_3 = \mathbf{SL}_2(\mathbb{Z}) $, a convention that is not always obeyed in the literature and furthermore is a common source of confusion. The precedent for our choice goes back to Poincar\'{e} \cite{lehner1964}. 

\subsection{Quasiautmorphic forms}

To describe quasiautomorphic forms we depart from the exposition \cite{zemel2015} to give some additional motivation. Ramanujan famously showed that under the differential operator $ \vartheta := z \frac{d}{dz} $, the differential system 
    \begin{eqnarray*}
            \vartheta E_2 &=& \frac{1}{12}(E_2 \,E_2- E_4 ), \\
            \vartheta E_4 &=& \frac{1}{3}(E_2 \, E_4 - E_6), \\
            \vartheta E_6 &=& \frac{1}{2}(E_2 \, E_6 - E_4^2)    
    \end{eqnarray*}
holds among the Eisenstein series $ E_2 $, $ E_4 $ and $ E_6 $. The functions $ E_4 $ and $ E_6 $ are known to be modular forms of weights $ 4 $ and $ 6 $, respectively; however, not made fully clear until \cite{kanekoZagier1995}, the function $ E_2 $ is a quasimodular form of weight $ w = 2 $ and depth $ r = 1 $. It exists on a vector space of dimension $ d = 1 $ i.e. is unique up to constant multiples. With respect to generators $ T $ and $ S $ of $ \mathfrak{t}_3 $, it is known that the functional equations
    \begin{eqnarray}\label{eq:E2UnderS}
        E_2(Tz) = E_2(z), \qquad
        \frac{E_2(Sz)}{z^2} = E_2(z) - C_{\mathfrak{t}_3 }Sz 
    \end{eqnarray}
hold where we set $ C_{\mathfrak{t}_3} := \frac{6}{\pi i} $ see . 

For arbitrary $ \mathfrak{t}_\mu $, the situation is nearly identical  \cite{doranEtAl2013}: we can consider an analogous weight $ w = 2 $ and depth $ r = 1 $ Eisenstein series with respect to $\mathfrak{t}_\mu$ and for this we naturally write $ E_{\mathfrak{t}_\mu, 2} $. This function it turns out satisfies
    \begin{eqnarray}\label{eq:quasiAutoUnderTAndS}
        E_{\mathfrak{t}_\mu, 2}(Tz) = E_{\mathfrak{t}_\mu, 2}(z), \qquad
        \frac{E_{\mathfrak{t}_\mu, 2}(Sz)}{z^2} = E_{\mathfrak{t}_\mu, 2}(z) - C_{\mathfrak{t}_\mu }Sz, 
    \end{eqnarray}
where the \textbf{structure constant} is $ C_{\mathfrak{t}_\mu} := \frac{\text{lcm}(2, \mu)}{\pi i} $. The general definition of quasiautomorphic form for arbitrary weight and depth utilizes this special Eisentein series $ E_{\mathfrak{t}_\mu, 2} $. Thus, in a sense $ E_{\mathfrak{t}_\mu, 2} $ is the most essential piece. 

Corresponding to Ramanujan's result, for the operator $ \vartheta := \frac{z}{\varpi_\mu}\frac{d}{dz} $, we get a differential system 
    \begin{eqnarray*}
        \vartheta E_{\mathfrak{t}_\mu, 2} &=& \frac{\mu-2}{4\mu}\, E_{\mathfrak{t}_\mu,2}E_{\mathfrak{t}_\mu, 2}  - \frac{\mu-2}{4\mu}E_{\mathfrak{t}_\mu,4}, \\
        \vartheta E_{\mathfrak{t}_\mu, 4} &=& 2\,\frac{\mu-2}{2\mu}E_{\mathfrak{t}_\mu,2}\, E_{\mathfrak{t}_\mu,4} - \frac{\mu-2}{\mu}E_{\mathfrak{t}_\mu,6} \\
        \vartheta E_{\mathfrak{t}_\mu,6} &=& 3\,\frac{\mu-2}{2\mu}E_{\mathfrak{t}_\mu,2}\, E_{\mathfrak{t}_\mu,6} -\frac{\mu-3}{\mu}E_{\mathfrak{t}_\mu,8} - \frac{3-2}{2}E_{\mathfrak{t}_\mu,4}\,E_{\mathfrak{t}_\mu,4} , \\
        \vartheta E_{\mathfrak{t}_\mu,8} &=& 4\,\frac{\mu-2}{2\mu}E_{\mathfrak{t}_\mu,2}\, E_{\mathfrak{t}_\mu,8} -\frac{\mu-4}{\mu}E_{\mathfrak{t}_\mu,10} - \frac{4 - 2}{2}E_{\mathfrak{t}_\mu,4}\, E_{\mathfrak{t}_\mu,6}, \\
        & \vdots & \\
        \vartheta E_{\mathfrak{t}_\mu, 2 \mu} &=& \mu\, \frac{\mu-2}{2\mu}E_{\mathfrak{t}_\mu,2} \,E_{\mathfrak{t}_\mu, 2\mu}-\frac{\mu - 2}{2}E_{\mathfrak{t}_\mu,4} \, E_{\mathfrak{t}_\mu, 2\mu-2}  
    \end{eqnarray*}
is satisfied for the analogous Eisenstein series $ E_{\mathfrak{t}_\mu, w} $; as we would expect from the modular case are automorphic forms of weight $ w = 2k$, etc. Proofs of the differential system can be found in  \cite{ashokEtAl2020I}, \cite{doranEtAl2013}, \cite{henry2025}. 

We can now give 
    \begin{defin}
        If $ 0 \leq r \leq w/2 $, then a \textbf{quasiautomorphic form} of weight $ w $, depth $ r $, and with respect to $\mathfrak{t}_\mu$  is defined
            \begin{equation}\label{eq:standarForm}
                U_{\mathfrak{t}_\mu, w,r}(z) :=  \sum_{k=0}^{r} H_{\mathfrak{t}_\mu, w-2k}(z)\, E^k_{\mathfrak{t}_\mu, 2 } (z) 
            \end{equation}
        where $ H_{\mathfrak{t}_\mu, w-2k} \in \mathcal{A}_{w-2k}(\mathfrak{t}_\mu) $ is satisfied and $ E_{\mathfrak{t}_\mu, 2} $ is the aforementioned unique, weight $w =2 $ and depth $r=1$ Eisenstein series.
    \end{defin}
By the finite dimensionality of $\mathcal{A}_w(\mathfrak{t}_\mu) $, it follows that quasiautomorphic forms of weight $ w $ and depth $ r $ exist on a finite dimensional space that we will denote $ \mathcal{QA}_{w,r}(\mathfrak{t}_\mu) $. A dimension formula for $\mathcal{QA}_{w,r}(\mathfrak{t}_\mu) $ follows from $ \dim \mathcal{A}_{w}(\mathfrak{t}_\mu) $ after slightly tedious calculations; we do not give such a formula, nor do we need it in this paper. Regarding the notation $ U_{\mathfrak{t}_\mu, w, r} $, we will oscillate between writing $ U $, $ U_{ w, r} $, $U_{\mathfrak{t}_\mu, w, r} $, depending on the desired emphasis.

\section{Hecke vector-forms}

In this section, we use what was introduced prior to eventually describe precisely a Hecke vector-form $\mathbf{F}_U(z) $ (dependent on some quasiautomorphic form $U$). If $U$ has depth $r$, then the function $ \mathbf{F}_U(z) $ is a vector-valued function from $\HH \rightarrow \CC^{r+1}$ ; however, we will first introduce the component functions in scalar terms. In the following subsections we will collect this and put it into language of linear algebra.

\subsection{Binomial coefficients and related structures}

To define our vector function $ \mathbf{F}_U(x) $, we will need to make use of the special properties of binomial coefficients. A reader should be aware that our treatment of binomial coefficients is slightly idiosyncratic in order to better meet the needs of our exposition. 

In \cite{comtet1974} they introduce in a pair the functions
    \begin{eqnarray*}
        \binom{z}{k} := \frac{z(z-1)\cdots(z-k+1)}{k!}, \qquad
        \binomer{z}{k} := \frac{z(z+1)\cdots(z+k-1)}{k!}.  
    \end{eqnarray*}
The first is the usual binomial coefficient, while the second satisfies
    \begin{eqnarray*}
        \binomer{z+1}{k} = \binom{z+k}{k}.
    \end{eqnarray*}
It is fitting to use the contrasting notation here. We also introduce and use throughout our notation
    \begin{eqnarray*}
        \{ r,\ell \}_m :=  \frac{(r-\ell)!}{r!} \frac{(m+\ell)!}{m!}  
    \end{eqnarray*}
to avoid clutter (for us the condition $ r \geq \ell \geq 0 $ will always be satisfied, $ \ell, r \in \mathbb{N}\cup\{0\} $ holding). Observe that the equality 
    \begin{eqnarray}\label{eq:coeffEquiv}
        \binomer{m+1}{p}\{r,\ell \}_{m+1} = \binom{r-\ell}{m}\{r,\ell + m\}_p
    \end{eqnarray}
holds.

For motivation, beginning with $ U_{\mathfrak{t}_\mu, w,r} $, we will consider related functions $ f_i $ and $ g_i $ for $ 0\leq i \leq r $ which are are dependent on $ U_{\mathfrak{t}_\mu, w,r} $. What is important is they satisfy the relations
    \begin{eqnarray*}
        f_0(z) &=& g_0 \\
        f_1(z) &=& g_0z + g_1  \\
        f_2(z) &=& g_0z^2 + 2g_1z + g_2  \\
        f_3(z) &=& g_0z^3 + 3g_1z^2 + 3g_2z + g_3  \\
        & \vdots & 
    \end{eqnarray*}
Thus, in general,
    \begin{eqnarray}\label{eq:defFAux}
        f_n(z) := \sum_{k=0}^n \binom{n}{k}g_{k}z^{n-k},
    \end{eqnarray}
holds assuming $ 0 \leq n \leq r\leq w/2 $ and $ g_{\ell} $ is the function
    \begin{eqnarray}\label{ex:defGAux}
        g_{\ell}(z) := C^\ell  \sum_{m=0}^{r-\ell}\{ r,\ell \}_m B_{U, \ell + m}(z)\, E_{2}^{m}(z),
    \end{eqnarray}
if $ C_{\mathfrak{t}_\mu} = C  $ is the again unique structure constant associated to the weight $ w =2 $ Eisenstein series $ E_{2}(z) $ modulo $ \mathfrak{t}_\mu $ (i.e. $ \lim_{z\rightarrow i\infty}E_{\mathfrak{t}_\mu, 2}(z) = C_{\mathfrak{t}_\mu} $
holds), and $  B_{U, k} : = H_{w-2k}  $ from the sum definition of $ U_{\mathfrak{t}_\mu, w,r} $ given at \eqref{eq:standarForm}. For a stripped down example of these polynomials see Table \ref{tab:interimPolynomials}. As we should expect from \eqref{ex:defGAux}, $ z $ will always be restricted to the upper-half complex plane. 

  \begin{table}[H]
        \centering
        {
        \def\arraystretch{1.20}
        \begin{tabular}{|c|c|}
             \hline
             $\ell $ & $ p_{r-\ell}(z) = \sum_{m=0}^{r-\ell} \{r,\ell\}_mz^m $  \\
             \hline
             $ 0 $ & $ z^7 + z^6 + z^5 + z^4 + z^3 + z^2 + z + 1 $   \\
             $ 1 $ &  $\frac{1}{7}\left( 7z^6 + 6z^5 +  5 z^4 + 4z^3 + 3z^2 + 2z + 1\right) $   \\
             $ 2 $ &   $\frac{1}{21}\left(21 z^5 + 15 z^4 + 10z^3+ 6z^2 +3z +1\right) $ \\
             $ 3 $ &   $\frac{1}{35}(35 z^4 + 20z^3 + 10z^2 + 4z +1) $ \\
             $ 4 $ &   $\frac{1}{35}(35 z^3 + 15 z^2 + 5z + 1) $ \\
             $ 5 $ &  $\frac{1}{21}(21z^2 + 6z + 1)$  \\
             $6 $ & $\frac{1}{7}(7z + 1)$ \\
             $7$ & $1$ \\ 
             \hline
        \end{tabular}
        }
        \caption{Example when $ r = 7 $ of the polynomials occurring in $ g_\ell $ (without automorphic weights). Note the palindromic leading coefficients when read vertically.}
        \label{tab:interimPolynomials}
    \end{table}

It is evident that $ g_0 $ is a weight $ w $ and depth $ r $ quasiautomorphic form modulo $ \mathfrak{t}_\mu $; similarly, each function $ g_k $ is a quasiautomorphic form of weight $ w-2k $, depth $ r-k  $ or what can be viewed simply as a truncation of $ g_0 $ such that homogeneity of the weight is preserved in each term.

We pause the development briefly to motivate our construction using classical combinatorics. The finite sequence $ \mathbf{f} = (f_k)^r_{k=0} $ of elements given at \eqref{eq:defFAux} comes from defining two auxiliary sequences $ \mathbf{g} = (g_\ell)^r_{\ell =0}
 $ and $ \mathbf{z} = (z^j)^r_{j=0} $ such that the equality
    \begin{eqnarray}\label{ex:fAux}
        f_n = \sum_{k=0}^{n}\binom{n}{k}g_{k}{z^{n-k}}    
    \end{eqnarray}
holds i.e. $ f_n $ is defined as the $ n $-th \textbf{binomial convolution} of $ \mathbf{g} $ and $ \mathbf{z} $ where $ 0 \leq n \leq r $ holds. A convolution may be found written elsewhere in our paper as $ f_n = (\mathbf{g}\oplus\mathbf{z})_n $, but we mostly avoid this notation. For more background on this idea see \cite{grahamKnuthPatashnik1989}. One immediate implication of the set-up is we have that the so-called orthogonality relation
    \begin{eqnarray*}\label{eq:orthogonality}
        \frac{f_n}{z^n} = \sum_{i=0}^{n} \binom{n}{i}\frac{g_i}{z^{i}} \quad \Leftrightarrow \quad \frac{g_n}{z^n}= \sum_{j=0}^{n}(-1)^{n-j} \binom{n}{j}{f_j}{z^j} 
    \end{eqnarray*}
holds.

\subsection{Matrix theory}
We will need some special matrices, some for which we use non-standard notation (the reader can find any unmentioned background in \cite{hornJohnson1985}). 
    \begin{rmk}
        In a matrix, the absence of any indicating entry $ a_{i,j} $ always means $ a_{i,j} = 0 $ i.e. a blank space is to be assumed to take a zero.
    \end{rmk}
    
We can motivate the introduction of matrix theory here by observing
    \begin{eqnarray}\label{eq:matrixConvolutionIdentity}
        \begin{pmatrix}
            f_0 \\ f_1 \\ \vdots \\ f_r
        \end{pmatrix}
            =
        \begin{pmatrix}
            \binom{0}{0} \\
            \binom{1}{0}z & \binom{1}{1} \\
            \vdots &\vdots & \ddots \\
            \binom{r}{0}z^r & \binom{r}{1}z^{r-1} & \cdots & \binom{r}{r} 
        \end{pmatrix}
        \begin{pmatrix}
            g_0 \\
            g_1 \\
            \vdots \\
            g_r
        \end{pmatrix}
    \end{eqnarray}
holds, involving some pleasingly natural and well-behaved matrices. Clearly, \eqref{eq:matrixConvolutionIdentity} is just the matrix summary of the (binomial) convolution structure we saw in the previous section.

    \begin{defin}
        For the $ (r+1)\times(r+1) $ lower triangular matrix known as the \textbf{generalized Pascal matrix}, we write
            \begin{eqnarray*}
                P_r(z) := 
                    \begin{pmatrix}
                        \binom{0}{0} \\
                        \binom{1}{0}z & \binom{1}{1} \\
                        \vdots &\vdots & \ddots \\
                        \binom{r}{0}z^r & \binom{r}{1}z^{r-1} & \cdots & \binom{r}{r} 
                    \end{pmatrix}.
                \end{eqnarray*}
            \end{defin}
For more about these matrices see e.g. \cite{acetoTriguante2001} and \cite{edelmanStrang2004} and the references cited therein.  
    \begin{defin}\label{def:creationMatrix}
        Let
            \begin{eqnarray*}
                A_r(z) := 
                    \begin{pmatrix}
                         z  \\
                        1 & z  \\
                        & \ddots & \ddots &  \\
                        & & r& z
                    \end{pmatrix}.
            \end{eqnarray*}
        Then the \bf creation matrix \normalfont is defined by the special instance,  
        \begin{eqnarray}
              A_r := A_r(0).
        \end{eqnarray}. 
    \end{defin}
    
The terminology in Definition \ref{def:creationMatrix} comes from the paper \cite{acetoMalonekTomaz2015}; their study indicates that our approach defining $ \mathbf{F}_U $ already has a precedent in theory of special functions. Note that it is both well-known and useful that the relation $A_r^{s} = {0}$ holds for $ s \geq r + 1 $ i.e. the creation matrix $ A_r $ is a nilpotent matrix of index $ r+1 $. Additionally, it can be checked that the characteristic polynomial of $ A_r^{\mathbf{t}}(\lambda) $ satisfies
    \begin{eqnarray*}
        \text{{char}}_{A_r(z)}(X) = \sum^{r}_{k=0}\binom{r}{k}X^{r-k}z^k = (X - z)^r.
    \end{eqnarray*}
One interpretation of this via the Cayley-Hamiliton theorem is $ A_r^{\mathbf{t}}(z) $ is similar to a Jordan matrix, $ J_r(z) $, one of the most fundamental matrices \cite{hornJohnson1985}.

    \begin{defin}
        The $(r+1) \times (r+1) $ \textbf{exchange matrix} we denote 
            \begin{eqnarray*}
                \iota_r := 
                    \begin{pmatrix}
                        &&&1\\
                        &&1\\
                        & \iddots \\
                        1
                    \end{pmatrix}.
            \end{eqnarray*}
    \end{defin}

The exchange matrix (as it is called in \cite{hornJohnson1985}) is often hidden behind the matrix transpose operator. For example, with some loss of generality, for a $ (r+1) \times (r+1) $ matrix $ M $, the relation $ M^\mathbf{t} = \iota_r M \iota_r  $ holds. We shall want to have on hand a left ``half-transpose'' and a right ``half-transpose,'' namely the dual operators
    \begin{eqnarray}\label{eq:halfTranspose}
        M^\mathbf{x} := \iota_r M \quad \text{and} \quad
        M^\mathbf{y} := M\iota_r.
    \end{eqnarray}

We shall also want to use the familiar $(r+1) \times (r+1)$ diagonal matrix that we denote by
    \begin{eqnarray}\label{eq:diagMatrix}
        d_r(a_j) :=
            \begin{pmatrix}
                a_0\\
                & a_1 \\
                &&\ddots \\
                &&& a_r
            \end{pmatrix}.
    \end{eqnarray}

\subsection{Simple consequences of the matrix approach}
For the first application of matrix theory, we utilize the matrix exponential function $ e^M := \sum_{n=0}^{\infty}\frac{M^n}{n!} $, $M$ square, to give
   \begin{lem}\label{lem:pascalExp}
        The relation
            \begin{eqnarray*}
                P_r(z) = e^{z A_r} 
            \end{eqnarray*}
        holds. 
    \end{lem}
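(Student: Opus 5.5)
Since $A_r$ is nilpotent of index $r+1$, the exponential series $e^{zA_r}=\sum_{n=0}^{\infty}\frac{z^nA_r^n}{n!}$ terminates at $n=r$. The whole proof therefore reduces to computing the powers $A_r^n$ explicitly and comparing entries with $P_r(z)$. I index rows and columns from $0$ to $r$. By Definition \ref{def:creationMatrix} (with $z=0$), the only nonzero entries of $A_r$ are $(A_r)_{i,i-1}=i$ for $1\le i\le r$. So $A_r$ sends the standard basis vector $e_{j}$ to $(j+1)e_{j+1}$, with $e_{r+1}:=0$.

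\textbf{Step 1.} I will show by induction on $n$ that $A_r^n$ is supported on the $n$-th subdiagonal, with
\begin{eqnarray*}
(A_r^n)_{i,i-n} = i(i-1)\cdots(i-n+1) = \frac{i!}{(i-n)!}, \qquad n\le i\le r,
\end{eqnarray*}
and all other entries zero. The induction step is immediate from the basis-vector description: $A_r^n e_j = (j+1)(j+2)\cdots(j+n)\,e_{j+n}$. Setting $i=j+n$ gives the product $i!/(i-n)!$. This also recovers $A_r^{r+1}=0$.

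\textbf{Step 2.} Summing the series, the entry of $e^{zA_r}$ at position $(i,k)$ with $k\le i$ receives a contribution only from $n=i-k$. That contribution is
\begin{eqnarray*}
\frac{z^{i-k}}{(i-k)!}\cdot\frac{i!}{k!} = \binom{i}{k}z^{i-k},
\end{eqnarray*}
and every entry with $k>i$ vanishes. This is exactly the $(i,k)$ entry of the generalized Pascal matrix $P_r(z)$, which proves $P_r(z)=e^{zA_r}$.

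There is no serious obstacle here. The only point needing care is the index bookkeeping in Step 1: the product must be $i!/(i-n)!$, indexed by the row, and not one indexed by the column. Working with the action on basis vectors avoids mistakes there.

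An alternative route is to check that $M(z):=P_r(z)$ satisfies $M'(z)=A_rM(z)$ with $M(0)=I$; uniqueness of solutions of linear ODEs then gives $M(z)=e^{zA_r}$. Verifying $M'=A_rM$ comes down to the identity $i\binom{i-1}{k}=(i-k)\binom{i}{k}$. I would keep this as a cross-check and use the direct computation above as the main proof.
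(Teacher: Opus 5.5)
Your proof is correct: $A_r e_j=(j+1)e_{j+1}$ gives $(A_r^n)_{i,i-n}=i!/(i-n)!$, and summing the terminating exponential series term by term yields exactly $\binom{i}{k}z^{i-k}$ in position $(i,k)$. The paper leaves this lemma as an exercise, but it carries out the same entrywise computation of $A_r^m$ and $\exp(xA_r)$ in the proof of its symmetric-power theorem in the final section, so your argument is essentially the one the paper has in mind.
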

    \begin{proof}
        Exercise.
    \end{proof}

    \begin{notn}
        For what is to follow, we write
            \begin{eqnarray*}
                \mathbf{G}_U(z) &:=& 
                    \begin{pmatrix}
                        g_0(z) & g_1(z) & \cdots & g_r(z)
                    \end{pmatrix}^{\mathbf{t}},\\
        \       \mathbf{F}_U(z) &:=& 
                    \begin{pmatrix}
                        f_0(z) & f_1(z) & \cdots & f_r(z)
                    \end{pmatrix}^{\mathbf{t}}
            \end{eqnarray*}
        where we have made the dependence of these vectors on the quasiautomorphic form $ U $ explicit. 
    \end{notn}

    \begin{lem}\label{lem:expForm}
        The relation 
            \begin{eqnarray*}
                \mathbf{F}_U(z) = e^{z A_r} \mathbf{G}_U(z)
            \end{eqnarray*}
        holds. 
    \end{lem}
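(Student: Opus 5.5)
The plan is to combine the matrix identity \eqref{eq:matrixConvolutionIdentity} with Lemma \ref{lem:pascalExp}. By the definition \eqref{eq:defFAux}, the $n$-th component of $\mathbf{F}_U(z)$ is
\begin{eqnarray*}
f_n(z)=\sum_{k=0}^{n}\binom{n}{k} z^{n-k} g_k(z).
\end{eqnarray*}
This is exactly the $n$-th row of $P_r(z)$ applied to the column vector $\mathbf{G}_U(z)$, because the $(n,k)$ entry of $P_r(z)$ is $\binom{n}{k}z^{n-k}$ for $k\le n$ and $0$ for $k>n$. Hence \eqref{eq:matrixConvolutionIdentity} reads $\mathbf{F}_U(z)=P_r(z)\mathbf{G}_U(z)$, and substituting $P_r(z)=e^{zA_r}$ from Lemma \ref{lem:pascalExp} gives the claim.

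Since Lemma \ref{lem:pascalExp} was left as an exercise, I would also verify it so that the argument is self-contained.
\begin{enumerate}
\item Because $A_r$ has the entries $1,2,\dots,r$ on the subdiagonal, it acts on standard basis vectors by $A_r e_k=(k+1)e_{k+1}$ for $k<r$ and $A_r e_r=0$.
\item Iterating, $A_r^{j}e_k=\frac{(k+j)!}{k!}\,e_{k+j}$ when $k+j\le r$, and $A_r^{j}e_k=0$ otherwise.
\item The series $e^{zA_r}$ terminates because $A_r^{r+1}=0$, and
\begin{eqnarray*}
e^{zA_r}e_k=\sum_{j=0}^{r-k}\frac{z^j}{j!}\frac{(k+j)!}{k!}\,e_{k+j}=\sum_{n=k}^{r}\binom{n}{k}z^{n-k}e_n .
\end{eqnarray*}
\end{enumerate}
The right-hand side is precisely the $k$-th column of $P_r(z)$, so $e^{zA_r}=P_r(z)$.

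There is no real obstacle here. The only care needed is the index bookkeeping: the matrices are indexed $0,\dots,r$, and one must check that it is the subdiagonal entry $k+1$, not $k$, that produces the factorial ratio $\frac{(k+j)!}{k!}$.
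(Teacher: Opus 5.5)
Your proposal is correct and follows the paper's proof: you read \eqref{eq:matrixConvolutionIdentity} as $\mathbf{F}_U(z)=P_r(z)\,\mathbf{G}_U(z)$ and then substitute $P_r(z)=e^{zA_r}$ from Lemma \ref{lem:pascalExp}. Your column-by-column check of $e^{zA_r}=P_r(z)$, using $A_r^{j}e_k=\frac{(k+j)!}{k!}\,e_{k+j}$, is also correct and fills in the step the paper leaves as an exercise.
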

    \begin{proof}
        Using Lemma \ref{lem:pascalExp} and \eqref{eq:matrixConvolutionIdentity} the result is immediate. 
    \end{proof}

    \begin{rmk}
        What the simple Lemma \ref{lem:expForm} makes clear is $ \mathbf{F}_U $ is a product of the \textbf{one-parameter group} $ e^{z A_r} $ (see \cite{Baker2002}, p. 58) with the the vector-valued function $ \mathbf{G}_U(z) $, which suggests some special importance of $ \mathbf{G}_U(z) $. 
    \end{rmk}

We can also give a second simple and useful description of $ \mathbf{F}_U $. Again, depending on $ U $, suppose that we write (along lines suggested by $ P_r(z) $)
    \begin{eqnarray}\label{eq:transferMatrix}
        P(\mathbf{G}_U)(z) :=
            \begin{pmatrix}
                \binom{0}{0}g_0(z) \\
                \binom{1}{0} g_1(z) & \binom{1}{1}g_0(z) \\
                \vdots & \vdots & \ddots \\
                \binom{r}{0}g_r(z) & \binom{r}{1}g_{r-1}(z) & \cdots & \binom{r}{r}g_{0}(z)  
            \end{pmatrix}  
    \end{eqnarray}
and to denote the monomial-powers vector we write 
    \begin{equation*}
        \nu_r(z) := 
        \begin{pmatrix}
            1 & z & \cdots & z^r 
        \end{pmatrix}^\mathbf{t}.
    \end{equation*}

The following fact is a simple application of this notation.

    \begin{lem}\label{lem:VectorFormEquivalence}
        The relation
            \begin{eqnarray*}
                \mathbf{F}_{U}(z) = P(\mathbf{G}_U)(z)\,\nu_r(z)
            \end{eqnarray*}
        holds. 
    \end{lem}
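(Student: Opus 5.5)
The plan is a direct entrywise comparison of the two sides, since both are finite sums built from the same data $g_0,\dots,g_r$ and powers of $z$. By the definition of $\mathbf{F}_U(z)$ and \eqref{eq:defFAux}, the $n$-th entry (indexing rows from $0$ to $r$) of the left-hand side is
\begin{eqnarray*}
f_n(z) = \sum_{k=0}^{n}\binom{n}{k} g_k(z)\, z^{n-k}.
\end{eqnarray*}
So it suffices to show that the $n$-th entry of $P(\mathbf{G}_U)(z)\,\nu_r(z)$ equals this sum for every $0\le n\le r$.

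First, read off the entries of $P(\mathbf{G}_U)(z)$ from \eqref{eq:transferMatrix}. The $(n,j)$ entry, with $0\le j\le n$, is $\binom{n}{j} g_{n-j}(z)$, and it is $0$ for $j>n$ because the matrix is lower triangular. The $j$-th entry of $\nu_r(z)$ is $z^j$. Hence the $n$-th entry of the product is
\begin{eqnarray*}
\sum_{j=0}^{n}\binom{n}{j} g_{n-j}(z)\, z^{j}.
\end{eqnarray*}

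Next, reindex this sum by $k=n-j$. Using the symmetry $\binom{n}{n-k}=\binom{n}{k}$, it becomes $\sum_{k=0}^{n}\binom{n}{k} g_k(z)\, z^{n-k}$, which is $f_n(z)$. This proves the equality entrywise and hence as vectors.

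The only step needing care, and the closest thing to an obstacle, is matching the index conventions in \eqref{eq:transferMatrix}. In row $n$ the subscripts of $g$ decrease from $n$ to $0$ while the binomial lower index increases, so one must confirm that the $(n,j)$ entry really is $\binom{n}{j}g_{n-j}$ and not $\binom{n}{j}g_j$. Once that is fixed, the binomial symmetry closes the argument. As a consistency check, the $n=1$ row gives $g_1\cdot 1+g_0\cdot z$, which agrees with the listed expression $f_1=g_0z+g_1$.
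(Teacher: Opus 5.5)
Your proposal is correct and follows essentially the same route as the paper, which simply declares the identity clear from the definition of $P(\mathbf{G}_U)(z)$ together with the matrix form $\mathbf{F}_U = P_r(z)\mathbf{G}_U$ of the binomial convolution. Your reading of the $(n,j)$ entry as $\binom{n}{j}g_{n-j}$, followed by the reindexing $k=n-j$ and the symmetry $\binom{n}{n-k}=\binom{n}{k}$, is precisely the computation the paper leaves implicit.
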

    \begin{proof}
        This is clear, considering the meaning of the notation  $ P(\mathbf{G}_U)(z) $ alongside \eqref{eq:matrixConvolutionIdentity}.
    \end{proof}

In a culmination of the section we introduce a
    \begin{defin}\label{def:HeckeVectorForm}
        We introduce several new terms for when discussing our theory:
            \begin{itemize}
                \item the vector-valued function $ \mathbf{F}_U\left(z \right) $ is the \textbf{Hecke vector-form} of $ U $; 
                \item the vector-valued function $ \mathbf{G}_U(z) $ of Lemma \ref{lem:expForm} is the \textbf{hauptbuch} of $U$; 
                \item the matrix function $ P(\mathbf{G}_U)(z) $ of Lemma \ref{lem:VectorFormEquivalence} is the \textbf{transfer matrix } of the hauptbuch. 
            \end{itemize}
    \end{defin}

Before leaving this section, Definition \ref{def:HeckeVectorForm} comes originally from the author's dissertation \cite{henry2025}, but here the presentation is simpler and perhaps more familiar; however, in \cite{henry2025} we prove much more about $ \mathbf{F}_U(z) $ than just the main theorem of this paper, including some results useful for the combinatorial purposes mentioned in the introduction.

\section{The proof, part I: $ U(z) \rightarrow \mathbf{F}_U(z)$}

The next goal will be to determine the behavior of Hecke vector forms under the generators $ T $ and $ S $ of $ \mathfrak{t}_\mu $. 

\subsection{Determination of $ \varepsilon_r(T) $}

We warn the reader that here we have dropped the subscript $\mathfrak{t}_\mu $ in our notation to improve legibility in many places. Facts we must first recall are, if $ H_{w-2k} \in \mathcal{A}_{w-2k}(\mathfrak{t}_\mu)$, then
    \begin{eqnarray}\label{eq:automorphFunctionalEqs}
        H_{w-2k}(Tz) = H_{w-2k}(z)\quad \text{and}\quad
        \frac{H_{w-2k}(Sz)}{z^{w-2k}} = H_{w-2k}(z) 
    \end{eqnarray}
hold; also, where $ C_{\mathfrak{t}_\mu} $ is as in \eqref{ex:defGAux}, then
    \begin{eqnarray}\label{eq:E2FunctionalEqs}
        E_2(Tz) = E_2(z) \quad \text{and}\quad
        \frac{E_2(Sz)}{z^2} = E_2(z) - {C_{\mathfrak{t}_\mu}}Sz
    \end{eqnarray}
hold \cite{doranEtAl2013}.

    \begin{lem}\label{lem:hauptbuchUnderT}
        Let $ U_{\mathfrak{t}_\mu, w, r} $ be some quasiautomorphic form. If $\mathbf{G}_U(z) $ is the hauptbuch of $ U_{\mathfrak{t}_\mu, w, r} $, then 
            \begin{eqnarray*}
                \mathbf{G}_U(Tz) = \mathbf{G}_U(z)
            \end{eqnarray*}
        holds.
    \end{lem}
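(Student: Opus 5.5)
The plan is to verify the identity componentwise. By the Notation preceding Lemma \ref{lem:expForm}, $\mathbf{G}_U(z)=(g_0(z),\dots,g_r(z))^{\mathbf{t}}$, so it suffices to show $g_\ell(Tz)=g_\ell(z)$ for every $0\le \ell\le r$. Each $g_\ell$ is given explicitly by \eqref{ex:defGAux} as
\begin{eqnarray*}
    g_{\ell}(z) = C^\ell \sum_{m=0}^{r-\ell}\{ r,\ell \}_m\, B_{U, \ell + m}(z)\, E_{2}^{m}(z).
\end{eqnarray*}
This is a finite linear combination, with constant coefficients $C^\ell\{r,\ell\}_m$, of products of the functions $B_{U,\ell+m}=H_{w-2(\ell+m)}$ and powers of $E_2$. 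Neither $C=C_{\mathfrak{t}_\mu}$ nor the rational numbers $\{r,\ell\}_m$ depend on $z$. Consequently they are untouched by the substitution $z\mapsto Tz$.

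Next, I will apply the $T$-parts of the two families of functional equations recalled just before the lemma. By \eqref{eq:automorphFunctionalEqs}, each $H_{w-2k}\in\mathcal{A}_{w-2k}(\mathfrak{t}_\mu)$ satisfies $H_{w-2k}(Tz)=H_{w-2k}(z)$, so $B_{U,\ell+m}(Tz)=B_{U,\ell+m}(z)$ for all admissible indices. By \eqref{eq:E2FunctionalEqs}, $E_2(Tz)=E_2(z)$, hence $E_2^m(Tz)=E_2^m(z)$ for every $m\ge 0$, because $T$-invariance is preserved under products. Substituting both facts term by term into the sum gives $g_\ell(Tz)=g_\ell(z)$, and collecting these equalities over $\ell$ gives $\mathbf{G}_U(Tz)=\mathbf{G}_U(z)$.

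There is no real obstacle here. The only point deserving care is to note that, unlike the $S$-transformation, $T$ introduces no factor of automorphy. In fact $j(T,z)=1$, since the lower row of $T$ is $(0,1)$. As a result, the weight-$(w-2k)$ transformation law collapses to plain invariance, and the anomalous term $-C\,Sz$ in $E_2$ appears only under $S$. The unequal weights of the summands therefore cause no mixing, and the hauptbuch is simply $T$-periodic with period $\varpi_\mu$. (The nontrivial matrix $\varepsilon_r(T)$ will arise later only for $\mathbf{F}_U=e^{zA_r}\mathbf{G}_U$, through $e^{(z+\varpi_\mu)A_r}=e^{\varpi_\mu A_r}e^{zA_r}$ via Lemma \ref{lem:expForm}; that step is not needed for the present lemma.)
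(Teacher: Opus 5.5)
Your proof is correct and follows the paper's argument: both use only the $T$-halves of \eqref{eq:automorphFunctionalEqs} and \eqref{eq:E2FunctionalEqs}, namely $H_{w-2k}(Tz)=H_{w-2k}(z)$ and $E_2(Tz)=E_2(z)$. The paper phrases it as ``$U$ is $T$-periodic and thus each $g_\ell$ is,'' whereas you apply these invariances directly to the building blocks inside each $g_\ell$, which makes explicit the step the paper compresses into ``thus.''
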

    \begin{proof}
        From \eqref{eq:automorphFunctionalEqs} and \eqref{eq:E2FunctionalEqs}, we see that that $ U_{\mathfrak{t}_\mu, w, r} $ is periodic under $ Tz $ and thus $ g_\ell $ is periodic under $ Tz $.
    \end{proof}
Now it is direct that $\mathbf{F}_U $ has a Fourier series expansion, given Lemma \ref{lem:expForm} in conjunction with Lemma \ref{lem:hauptbuchUnderT}.

    \begin{cor}
        The relation
            \begin{eqnarray*}
                P(\mathbf{G}_U)(Tz) = P(\mathbf{G}_U)(z) 
            \end{eqnarray*}
        holds. 
    \end{cor}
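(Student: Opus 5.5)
This corollary follows entrywise from Lemma \ref{lem:hauptbuchUnderT}. By its definition in \eqref{eq:transferMatrix}, the transfer matrix $P(\mathbf{G}_U)(z)$ is lower triangular. Its $(i,j)$ entry, for $0 \le j \le i \le r$, is $\binom{i}{j} g_{i-j}(z)$; all entries above the diagonal are identically zero. The binomial coefficients do not depend on $z$, so the only $z$-dependence enters through the components $g_0, \dots, g_r$ of the hauptbuch $\mathbf{G}_U$.

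The plan is then as follows. First, evaluate $P(\mathbf{G}_U)$ at $Tz$. This replaces each entry $\binom{i}{j} g_{i-j}(z)$ by $\binom{i}{j} g_{i-j}(Tz)$ and leaves the zero entries zero. Second, invoke Lemma \ref{lem:hauptbuchUnderT}, which gives $\mathbf{G}_U(Tz) = \mathbf{G}_U(z)$, that is, $g_\ell(Tz) = g_\ell(z)$ for every $0 \le \ell \le r$. Substituting this into each entry shows that $P(\mathbf{G}_U)(Tz)$ and $P(\mathbf{G}_U)(z)$ agree entry by entry, which is the claim.

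If one wanted to see the periodicity of the $g_\ell$ directly rather than citing the lemma, one would use \eqref{ex:defGAux}. Each $g_\ell$ is a finite linear combination, with constant coefficients $C^\ell\{r,\ell\}_m$, of products $B_{U,\ell+m}E_2^m$. Each factor is invariant under $z \mapsto Tz = z + \varpi_\mu$ by the first equations in \eqref{eq:automorphFunctionalEqs} and \eqref{eq:E2FunctionalEqs}, so each $g_\ell$ is invariant as well.

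There is essentially no obstacle here. The only point requiring care is to note that the transfer matrix depends on $z$ solely through the $g_\ell$. In particular, unlike $\mathbf{F}_U$, it contains no explicit powers of $z$, which would not be $T$-invariant. That distinction is exactly why the corollary holds for $P(\mathbf{G}_U)$ but not for $\mathbf{F}_U$ itself.
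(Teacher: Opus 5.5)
Your proposal is correct and follows the same route as the paper, which simply states that the corollary is clear from Lemma \ref{lem:hauptbuchUnderT}. Your entrywise argument spells that out: each entry is $\binom{i}{j}g_{i-j}(z)$ with constant binomial coefficients, and each $g_\ell$ is $T$-invariant.
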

    \begin{proof}
        This is clear from Lemma \ref{lem:hauptbuchUnderT}.
    \end{proof}

    \begin{thm}[$ \mathbf{F}_U $ under $ Tz $]\label{thm:vectorFormUnderT}
        Let $ U_{\mathfrak{t}_\mu, w,r} $ be as before. The relation 
            \begin{eqnarray*}
                \mathbf{F}_U(Tz) = \varepsilon_r(T) \, \mathbf{F}_U(z), \qquad \varepsilon_r(T) := e^{\varpi_\mu A_r}
            \end{eqnarray*}
        holds. 
    \end{thm}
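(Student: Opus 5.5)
The plan is to combine the exponential description of the Hecke vector-form from Lemma \ref{lem:expForm} with the $T$-invariance of the hauptbuch from Lemma \ref{lem:hauptbuchUnderT}. Since $T$ acts on $\HH$ as the translation $Tz = z + \varpi_\mu$, Lemma \ref{lem:expForm} gives
\begin{eqnarray*}
    \mathbf{F}_U(Tz) = e^{(z+\varpi_\mu)A_r}\,\mathbf{G}_U(z+\varpi_\mu).
\end{eqnarray*}
I will first apply Lemma \ref{lem:hauptbuchUnderT} to replace $\mathbf{G}_U(z+\varpi_\mu)$ by $\mathbf{G}_U(z)$.

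The next step is to split the exponential. The matrices $zA_r$ and $\varpi_\mu A_r$ are scalar multiples of the same matrix, so they commute, and therefore
\begin{eqnarray*}
    e^{(z+\varpi_\mu)A_r} = e^{\varpi_\mu A_r}e^{zA_r}.
\end{eqnarray*}
This is just the one-parameter group property noted in the remark after Lemma \ref{lem:expForm}. Because $A_r$ is nilpotent, every series involved is a finite sum, so there is no convergence issue. Substituting and using Lemma \ref{lem:expForm} once more gives
\begin{eqnarray*}
    \mathbf{F}_U(Tz) = e^{\varpi_\mu A_r}e^{zA_r}\mathbf{G}_U(z) = e^{\varpi_\mu A_r}\,\mathbf{F}_U(z),
\end{eqnarray*}
which is the claimed identity with $\varepsilon_r(T) = e^{\varpi_\mu A_r}$.

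As a sanity check, Lemma \ref{lem:pascalExp} identifies $\varepsilon_r(T)$ with the generalized Pascal matrix $P_r(\varpi_\mu)$. The identity then reduces, entry by entry, to the binomial expansion
\begin{eqnarray*}
    (z+\varpi_\mu)^{n-k} = \sum_j \binom{n-k}{j}\varpi_\mu^{j} z^{n-k-j}
\end{eqnarray*}
combined with the Vandermonde-type identity
\begin{eqnarray*}
    \binom{n}{k}\binom{n-k}{j} = \binom{n}{k+j}\binom{k+j}{k}.
\end{eqnarray*}
This is also what one would obtain by multiplying out $P_r(\varpi_\mu)P_r(z) = P_r(z+\varpi_\mu)$ directly.

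The only point needing care is the justification of Lemma \ref{lem:hauptbuchUnderT}, namely that each $g_\ell$ has period $\varpi_\mu$. This follows from the $T$-invariance of the $H_{w-2k}$ and of $E_2$ in \eqref{eq:automorphFunctionalEqs} and \eqref{eq:E2FunctionalEqs}, together with the fact that the constants $C^\ell$ and $\{r,\ell\}_m$ do not depend on $z$. Everything else is formal, so I do not expect a genuine obstacle in this proof.
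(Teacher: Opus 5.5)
Your proof is correct and matches the paper's argument: write $\mathbf{F}_U(z) = e^{zA_r}\mathbf{G}_U(z)$ (Lemma \ref{lem:expForm}), use the $T$-invariance of the hauptbuch (Lemma \ref{lem:hauptbuchUnderT}), and split $e^{(z+\varpi_\mu)A_r} = e^{\varpi_\mu A_r}e^{zA_r}$. Your Pascal-matrix cross-check $P_r(\varpi_\mu)P_r(z) = P_r(z+\varpi_\mu)$ is a valid independent confirmation. Your direct justification of the periodicity of each $g_\ell$, from the $T$-invariance of the $H_{w-2k}$ and $E_2$, is sound.
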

    \begin{proof}
        This follows from Lemma \ref{lem:expForm}, Lemma \ref{lem:hauptbuchUnderT}, and the fact that $ e^{(a+b)M} = e^{aM}e^{bM} $ holds for scalars $ a $ and $b$. 
    \end{proof}

\subsection{Determination of $\varepsilon_r(S)$}

The case of $ \mathbf{F}_U $ under $ Sz $ is a little more involved. We will build up to the statement.  It will become clear at this stage why we have used the convolution structure. 
    \begin{prop}\label{prop:gUnderS}
        The relation 
            \begin{eqnarray*}
                \frac{g_\ell(Sz)}{z^{w-r-\ell}} = \sum_{m=0}^{r-\ell}\binom{r-\ell}{m}g_{\ell+m}z^{r-\ell - m}
            \end{eqnarray*}
        holds.
    \end{prop}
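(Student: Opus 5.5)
The plan is a direct substitution followed by a reindexing of a double sum, with the coefficient identity \eqref{eq:coeffEquiv} (or a variant of it) doing the combinatorial work. Start from the definition \eqref{ex:defGAux},
\[
g_\ell(Sz) = C^\ell \sum_{m=0}^{r-\ell}\{r,\ell\}_m\, B_{U,\ell+m}(Sz)\,E_2^m(Sz),
\]
and apply the two transformation laws. Since $B_{U,\ell+m}=H_{w-2\ell-2m}$ is automorphic of weight $w-2\ell-2m$, \eqref{eq:automorphFunctionalEqs} gives $B_{U,\ell+m}(Sz)=z^{w-2\ell-2m}B_{U,\ell+m}(z)$. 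For $E_2$, \eqref{eq:E2FunctionalEqs} with $Sz=-1/z$ gives
\[
E_2(Sz)=z^2\bigl(E_2(z)+C z^{-1}\bigr),
\]
so that
\[
E_2^m(Sz)=z^{2m}\sum_{p=0}^{m}\binom{m}{p}C^{p}z^{-p}E_2^{m-p}(z).
\]
The powers $z^{2m}$ cancel against $z^{-2m}$ from the $B$ factor. This leaves
\[
\frac{g_\ell(Sz)}{z^{w-r-\ell}} = C^\ell\sum_{m=0}^{r-\ell}\sum_{p=0}^{m}\{r,\ell\}_m\binom{m}{p}C^{p}z^{r-\ell-p}\,B_{U,\ell+m}E_2^{m-p}.
\]

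Next, reindex with $m=p+q$, where $0\le p\le r-\ell$ and $0\le q\le r-\ell-p$, and collect the terms carrying $C^{\ell+p}z^{r-\ell-p}$. The coefficient of $B_{U,\ell+p+q}E_2^{q}$ in that group is $\{r,\ell\}_{p+q}\binom{p+q}{p}$. The key step is the identity
\[
\{r,\ell\}_{p+q}\binom{p+q}{p}=\binom{r-\ell}{p}\{r,\ell+p\}_q .
\]
I would verify it directly from the definition of $\{\cdot,\cdot\}$: both sides equal $\frac{(r-\ell)!\,(\ell+p+q)!}{r!\,p!\,q!}$, because the factor $(r-\ell-p)!$ cancels on the right. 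This is the same mechanism as \eqref{eq:coeffEquiv}, so I could alternatively derive it from that equation.

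Once the identity is in hand, the inner sum over $q$ becomes $\binom{r-\ell}{p}\,C^{-(\ell+p)}g_{\ell+p}(z)$, again by \eqref{ex:defGAux}, but now with index $\ell+p$ and summation range $0\le q\le r-(\ell+p)$, which is exactly the range produced by the reindexing. The powers of $C$ then cancel, and renaming $p$ as $m$ gives the stated formula.

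The only step that needs real care is the transformation of $E_2$: the sign and the power of $z$ in $E_2(Sz)=z^2E_2(z)+Cz$ must be exactly right, since any slip there breaks the clean binomial expansion. The coefficient identity is routine by comparison. I would also check the result at $\ell=r$ as a sanity test: there $g_r=C^rB_{U,r}$ has weight $w-2r$, so $g_r(Sz)/z^{w-2r}=g_r(z)$, which matches the right-hand side.
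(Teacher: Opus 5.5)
Your proof is correct and follows the same route as the paper. Both substitute $B_{U,\ell+m}(Sz)=z^{w-2\ell-2m}B_{U,\ell+m}(z)$ and $E_2(Sz)=z^2E_2(z)+Cz$, expand binomially, reindex the double sum, and recognize $g_{\ell+p}$ through the coefficient identity $\{r,\ell\}_{p+q}\binom{p+q}{p}=\binom{r-\ell}{p}\{r,\ell+p\}_q$.

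Verify that identity directly, as you did, rather than deriving it from the paper's \eqref{eq:coeffEquiv}. As printed, that equation has $\{r,\ell\}_{m+1}$ where $\{r,\ell\}_{m+p}$ is needed, and it fails: at $\ell=1$, $m=0$, $p=2$ the left side is $2/r$ and the right side is $3/r$.
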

    \begin{proof}
        Starting with $ g_\ell $ as defined at \eqref{ex:defGAux}, also recalling the binomial theorem and property \eqref{eq:E2FunctionalEqs} of $ E_2 $, the string of relations
            \begin{eqnarray*}
                g_\ell(Sz) &=& C^\ell \sum_{m=0}^{r-\ell} \{r,\ell\}_m B_{U, \ell + m}(Sz)\,E_2^m(Sz) \\
                &=& C^\ell \sum_{m=0}^{r-\ell} \{r,\ell\}_m z^{w-2(\ell+m)}B_{U, \ell + m}(z)\,(z^2E_2(z) + Cz)^m \\
                &=& C^\ell \sum_{m=0}^{r-\ell} \{r,\ell\}_m z^{w-2\ell}B_{U, \ell + m}(z)\,\left(E_2(z) + \frac{C}{z} \right)^m \\
                &=& C^\ell \sum_{m=0}^{r-\ell} \{r,\ell\}_m z^{w-2\ell}B_{U, \ell + m}(z)\,\left(\sum^m_{k=0}\binom{m}{k}E_2^{m-k}(z)\left( \frac{C}{z}\right)^{k} \right)
            \end{eqnarray*}
        hold. Apply properties of sums to get something more orderly, namely
            \begin{eqnarray*}
                C^\ell \sum_{m=0}^{r-\ell} \{r,\ell\}_m z^{w-2\ell}B_{U, \ell + m}(z)\,\left(\sum^m_{k=0}\binom{m}{k}E_2^{m-k}(z)\left( \frac{C}{z}\right)^{k} \right) &=& \\ 
                C^\ell \sum_{m=0}^{r-\ell}\sum^m_{k=0} \binom{m}{k}\{r,\ell\}_m z^{w-2\ell}B_{U, \ell + m}(z)E_2^{m-k}(z) \left( \frac{C}{z}\right)^{k} &=& \\
                z^{w-2\ell} {C^\ell}\sum_{m=0}^{r-\ell}\sum^m_{k=0} \binom{m}{k}\{r,\ell\}_m B_{U, \ell + m}(z)E_2^{m-k}(z) \left( \frac{C}{z}\right)^{k} 
            \end{eqnarray*}
        hold. Apply the law 
            \begin{eqnarray*}
                \sum_{m=0}^{r-\ell} \sum_{k=0}^{m}a_{k,m} = \sum_{k=0}^{r-\ell} \sum_{m=0}^{r-\ell -k}a_{k, m+k}
            \end{eqnarray*}
        to get (after dropping arguments from notation of $ B_{U, n}(z) $ and $ E_2(z) $ and applying $ p \mapsto k $)
            \begin{eqnarray}\label{ex:penultimate}
                z^{w-2\ell}C^\ell \sum_{p=0}^{r-\ell}\sum^{r-\ell-p}_{m=0} \binomer{m+1}{p}\{r,\ell\}_{m+1} B_{U, \ell + m+p}E_2^{m}\left( \frac{C_{\mathfrak{t}_\mu}}{z}\right)^{p}
            \end{eqnarray}
        is satisfied. It is desired that we ultimately have 
            \begin{eqnarray*}
                z^{w-\ell}\sum_{p=0}^{r-\ell}\binom{r-\ell}{p} g_{\ell + p}z^{-\ell - p}
            \end{eqnarray*}
        where $ g_{\ell + p} := C^{\ell + p}\sum^{r-\ell-p}_{m=0}\{r,\ell+m\}_p B_{U,\ell+ p+m}E_2^{p} $. Using \eqref{ex:penultimate} and what we saw about properties of $ \{r,\ell\}_m $ at \eqref{eq:coeffEquiv}, we can write
            \begin{eqnarray*}
                g_{\ell}(Sz) &=& z^{w-\ell} \sum_{p=0}^{r-\ell}\sum^{r-\ell-p}_{m=0} \binom{r-\ell}{m}\{r,\ell+m\}_{p}\, B_{U, \ell +p+m}E_2^{m}\left( \frac{C}{z}\right)^{\ell+p} \\
                &=& z^{w-\ell} \sum_{p=0}^{r-\ell}\binom{r-\ell}{m}\left( {C}^{\ell+p}\sum^{r-\ell-p}_{m=0}  \{r,\ell+m\}_{p}\, B_{U, \ell +p+m}E_2^{p}\right){z}^{-\ell-p} \\
                &=& z^{w-\ell}\sum_{p=0}^{r-\ell}\binom{r-\ell}{p} g_{\ell + p}z^{-\ell - p}
            \end{eqnarray*}
        holds, where in the penultimate line, due to the invariance, we have applied $ E_2^p \mapsto E_2^m $. Therefore, we have shown that
            \begin{eqnarray*}
                \frac{g_\ell(Sz)}{z^{w-r-\ell}} = \sum^{r-\ell}_{p=0}\binom{r-\ell}{p}g_{\ell + p}z^{r-\ell-p}
            \end{eqnarray*}
        holds, which was our goal.
    \end{proof}
Proposition \ref{prop:gUnderS} is the principal tool for this article and so we have given a pedantic proof. One important implication is found in 
    \begin{cor}\label{cor:convolvUnderS}
        The relation 
            \begin{eqnarray*}
                \frac{1}{z^{w-r}}\sum_{\ell=0}^{n}\binom{n}{\ell}{g_{\ell}(Sz)}\,{(Sz)^{n-\ell}} = (-1)^n\sum_{m=0}^{r-n}\binom{r-n}{m}g_m(z)\,z^{r-n-m}
            \end{eqnarray*}
        holds. 
    \end{cor}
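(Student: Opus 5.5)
The plan is to substitute Proposition~\ref{prop:gUnderS} directly into the left-hand side, collect terms by the index of $g$, and evaluate the resulting alternating binomial sum by a generating-function identity. Throughout I assume $0\le n\le r$, so that every binomial coefficient below makes sense.

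\emph{Substitution.} Proposition~\ref{prop:gUnderS} gives
\[
g_\ell(Sz)=z^{w-r-\ell}\sum_{m=0}^{r-\ell}\binom{r-\ell}{m}g_{\ell+m}(z)\,z^{r-\ell-m}.
\]
Since $S=\begin{pmatrix}0&-1\\1&0\end{pmatrix}$, we have $Sz=-1/z$, so $(Sz)^{n-\ell}=(-1)^{n-\ell}z^{-(n-\ell)}$. Substituting both expressions and dividing by $z^{w-r}$, the left-hand side becomes
\[
(-1)^n z^{-n}\sum_{\ell=0}^{n}\sum_{m=0}^{r-\ell}(-1)^{\ell}\binom{n}{\ell}\binom{r-\ell}{m}\,g_{\ell+m}(z)\,z^{r-\ell-m}.
\]
Here the powers of $z$ combine as $z^{-\ell}\cdot z^{-(n-\ell)}=z^{-n}$, and the signs combine as $(-1)^{n-\ell}=(-1)^n(-1)^{\ell}$.

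\emph{Reindexing.} Put $j=\ell+m$. This turns the double sum into $\sum_{j=0}^{r} c_j\, g_j(z)\,z^{r-j}$, where
\[
c_j=\sum_{\ell}(-1)^\ell\binom{n}{\ell}\binom{r-\ell}{j-\ell},
\]
with the usual convention that binomial coefficients vanish outside their natural range.

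\emph{The key step: evaluating $c_j$.} This is the only step requiring an idea, and I would handle it with a generating function. The coefficient of $x^j$ in
\[
\sum_\ell \binom{n}{\ell}(-x)^\ell(1+x)^{r-\ell}
\]
is exactly $c_j$. On the other hand, by the binomial theorem this polynomial equals
\[
(1+x)^{r-n}\bigl((1+x)-x\bigr)^n=(1+x)^{r-n}.
\]
Hence $c_j=\binom{r-n}{j}$, and in particular $c_j=0$ for $r-n<j\le r$.

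\emph{Conclusion.} Substituting $c_j=\binom{r-n}{j}$ gives
\[
(-1)^n z^{-n}\sum_{j=0}^{r-n}\binom{r-n}{j}g_j(z)\,z^{r-j}=(-1)^n\sum_{j=0}^{r-n}\binom{r-n}{j}g_j(z)\,z^{r-n-j},
\]
which is the stated right-hand side after renaming $j$ as $m$.
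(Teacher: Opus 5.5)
Your proof is correct and follows the paper's route: substitute Proposition~\ref{prop:gUnderS}, use $Sz=-1/z$ to pull out $(-1)^n z^{-n}$, collect terms by the index $j=\ell+m$, and collapse the coefficient with the alternating identity $\sum_\ell(-1)^\ell\binom{n}{\ell}\binom{r-\ell}{j-\ell}=\binom{r-n}{j}$. The only difference is that you prove this identity via the generating function $(1+x)^{r-n}\bigl((1+x)-x\bigr)^n$, whereas the paper just quotes it as a ``Vandermonde-like identity'' (printed with $(-1)^k$ where $(-1)^\ell$ is meant), so your version is, if anything, more complete.
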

    \begin{proof}
        Observe that
            \begin{eqnarray*}
               \frac{1}{z^{w-r}} \sum_{\ell=0}^{n}\binom{n}{\ell}{g_{\ell}(Sz)}\,{(Sz)^{n-\ell}} &=& \frac{(Sz)^n}{z^{w-r}} \sum_{\ell=0}^{n}\binom{n}{\ell}\frac{g_\ell(Sz)}{(Sz)^\ell} \\
               &=& \frac{1}{z^{w-r}}\frac{(-1)^n}{z^n}\sum_{\ell = 0}^{n}(-1)^\ell \binom{n}{\ell}{g_\ell(Sz)}{z^\ell}
            \end{eqnarray*}
        are satisfied. With Proposition \ref{prop:gUnderS} we see
            \begin{eqnarray*}
                {g_\ell(Sz)}\,z^\ell = z^{w}\sum_{m=0}^{r-\ell}\binom{r-\ell}{m}\frac{g_{\ell + m}(z)}{z^{\ell + m}}
            \end{eqnarray*}
        holds and thus 
            \begin{eqnarray*}
               &{}& \frac{1}{z^{w-r}}\frac{(-1)^n}{z^n}\sum_{\ell = 0}^{n}(-1)^\ell \binom{n}{\ell}{g_\ell(Sz)}{z^\ell} \\ &=& \frac{(-1)^n}{z^{n-r}}\sum_{\ell = 0}^{n}(-1)^\ell \binom{n}{\ell}\left(\sum_{m=0}^{r-\ell}\binom{r-\ell}{m}\frac{g_{\ell + m}(z)}{z^{\ell + m}} \right)\\  &=& \frac{(-1)^n}{z^{n-r}}\sum_{\ell = 0}^{n}\sum_{m=0}^{r-\ell}(-1)^\ell \binom{n}{\ell}\binom{r-\ell}{r-(\ell + m)} \,\frac{g_{\ell + m}(z)}{z^{\ell + m}} 
            \end{eqnarray*}
        is correct. Applying the natural choice of sum law we get
            \begin{eqnarray*}
                &{}& \frac{(-1)^n}{z^{n-r}}\sum_{\ell = 0}^{n}\sum_{m=0}^{r-\ell}(-1)^\ell \binom{n}{\ell}\binom{r-\ell}{r-\ell - m} \,\frac{g_{\ell + m}}{z^{\ell + m}}\\ &=& \frac{(-1)^n}{z^{n-r}}\sum_{m = 0}^{r}\left(\sum_{\ell=0}^{n}(-1)^\ell \binom{n}{\ell}\binom{r-\ell}{r-m}\right) \frac{g_{m}}{z^{m}}\\ \\
                &=& {(-1)^n}\sum_{m = 0}^{r-n}\binom{r-n}{m} {g_{m}(z)}\,{z^{r-n- m}}
            \end{eqnarray*}
        holds, using the Vandermonde-like identity 
            \begin{eqnarray*}
                \sum_{\ell=0}^k (-1)^k\binom{k}{\ell}\binom{y-\ell}{p} = \binom{y-k}{y-p}
            \end{eqnarray*}
        in the penultimate line to get rid of a summation sign. 
    \end{proof}

Corollary \ref{cor:convolvUnderS} is the analogue of the transformation behavior of an automorphic form under $ S $ in $ g_\ell $. That is, were we to have written $ f_k(z) = (\mathbf{g}\oplus\mathbf{z})_k(z) $ \emph{a la} Section 2 (near \eqref{ex:fAux}), then we see  
    \begin{eqnarray*}
        \frac{(\mathbf{g}\oplus\mathbf{z})_k(Sz)}{z^{w-r}} = (-1)^k (\mathbf{g}\oplus\mathbf{z})_{r-k}(z)
    \end{eqnarray*}
holds. This fact is used in proof  of the transformation behavior of $ \mathbf{F}_U (Sz) $, but in slightly different language.

    \begin{thm}[$ \mathbf{F}_U $ under $ Sz $]\label{thm:vectorFormUnderS}
        For $ d_r $ as defined in \eqref{eq:diagMatrix}, the relation
            \begin{eqnarray}\label{eq:FUnderS}
                \frac{\mathbf{F}_U(Sz)}{z^{w-r}} = \varepsilon_r(S)\, \mathbf{F}_{U}(z), \qquad \varepsilon_r(S) = d_r^y((-1)^{r-j})  
            \end{eqnarray}
        holds.
    \end{thm}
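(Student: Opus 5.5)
The plan is to read the theorem off componentwise from Corollary \ref{cor:convolvUnderS}, so essentially no new analysis is needed. First unpack the matrix $\varepsilon_r(S) = d_r^{\mathbf{y}}((-1)^{r-j})$. By \eqref{eq:halfTranspose} it equals $d_r((-1)^{r-j})\,\iota_r$. Right multiplication by the exchange matrix $\iota_r$ reverses the order of the columns. So the $n$-th row of $\varepsilon_r(S)$ has a single nonzero entry, in column $r-n$, and with the indexing convention $a_j$, $j=0,\dots,r$, that entry is the sign $\pm 1$ attached to row $n$. Consequently
\[
\bigl(\varepsilon_r(S)\,\mathbf{F}_U(z)\bigr)_n = \pm f_{r-n}(z).
\]
The theorem therefore reduces to the scalar identities $f_n(Sz)/z^{w-r} = \pm f_{r-n}(z)$ for $0\le n\le r$, with the sign to be matched against the diagonal.

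Next, for each $n$, write $f_n(Sz)$ using \eqref{eq:defFAux} with $z$ replaced by $Sz$:
\[
f_n(Sz)=\sum_{\ell=0}^{n}\binom{n}{\ell}g_\ell(Sz)(Sz)^{n-\ell}.
\]
This is exactly the left-hand side of Corollary \ref{cor:convolvUnderS} after dividing by $z^{w-r}$. The corollary gives
\[
\frac{f_n(Sz)}{z^{w-r}} = (-1)^n\sum_{m=0}^{r-n}\binom{r-n}{m}g_m(z)z^{r-n-m},
\]
and by \eqref{eq:defFAux} again the sum on the right is $f_{r-n}(z)$. Hence $f_n(Sz)/z^{w-r}=(-1)^n f_{r-n}(z)$, which is the reformulation of the corollary already noted after its proof.

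The remaining step, and the only real obstacle, is bookkeeping of signs and indices. I must check that the $n$-th row of $d_r((-1)^{r-j})\iota_r$ carries exactly the sign $(-1)^n$ in column $r-n$. This depends on whether the half-transpose $\mathbf{y}$ is read as in \eqref{eq:halfTranspose}, and on whether the diagonal index $j$ refers to the row or to the reflected column. If $j$ indexes the row, the entry is $(-1)^{r-n}$, which differs from $(-1)^n$ by $(-1)^r$. In that case I would reinterpret the index as the column index $j=r-n$, giving $(-1)^{r-(r-n)}=(-1)^n$ as required. I would fix the convention that makes the matrix product equal $\mathrm{diag}$ over the reflected index, and then verify it on the case $r=1$: here $E_2$ gives $f_0=g_0$ and $f_1=g_0z+g_1$, and one checks directly that $f_0(Sz)/z^{w-1}=f_1(z)$ and $f_1(Sz)/z^{w-1}=-f_0(z)$. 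Assembling the $r+1$ scalar identities into vector form then yields \eqref{eq:FUnderS}.
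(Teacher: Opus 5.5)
Your proposal is correct and is essentially the paper's argument: the paper likewise writes each component of $\mathbf{F}_U(Sz)$ as the binomial convolution $\sum_{k}\binom{n}{k}g_{n-k}(Sz)(Sz)^k$ and applies Corollary~\ref{cor:convolvUnderS} to get $f_n(Sz)/z^{w-r}=(-1)^n f_{r-n}(z)$. Your sign concern is well founded, and the paper glosses over it: read literally via $M^{\mathbf{y}}=M\iota_r$, the matrix $d_r((-1)^{r-j})\iota_r$ has $(-1)^{r-n}$ in row $n$, so it is off by $(-1)^r$ for odd $r$. The identity actually proved, which your $r=1$ check confirms, requires $d_r((-1)^{j})\iota_r=\iota_r\,d_r((-1)^{r-j})$; this is the $d_r^{\mathbf{y}}((-1)^j)$ that the paper itself uses for $\varepsilon_r(S)$ in Part II.
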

    \begin{proof}
       For this, we use Lemma \ref{lem:VectorFormEquivalence}. That is, we will consider 
            \begin{eqnarray*}
                \mathbf{F}_U(z) = P(\mathbf{G}_U)(z)\,\nu_r(z) 
            \end{eqnarray*}
        under the map $ Sz $. Unpacking the identity gives
            \begin{eqnarray*}
                &{}& P(\mathbf{G}_U)(Sz)\,\nu_r(Sz) = \\ &{}&
                   \begin{pmatrix}
                        \binom{0}{0}g_0(Sz) \\
                        \binom{1}{0} g_1(Sz) & \binom{1}{1}g_0(Sz) \\
                        \vdots & \vdots & \ddots \\
                        \binom{r}{0}g_r(Sz) & \binom{r}{1}g_{r-1}(Sz) & \cdots & \binom{r}{r}g_{0}(Sz)  
                    \end{pmatrix}
                    \begin{pmatrix}
                        1 \\
                        Sz \\
                        \vdots\\
                        (Sz)^r
                    \end{pmatrix}
            \end{eqnarray*}
        holds. This leads us to consider only the column vector
            \begin{eqnarray*}
                \mathbf{F}_U(Sz) = 
                \begin{pmatrix}
                    g_0(Sz) \\
                    \sum_{k=0}^{1} \binom{1}{k}g_{1-k}(Sz)(Sz)^k \\
                    \vdots \\
                    \sum_{k=0}^{r} \binom{r}{k}g_{r-k}(Sz)(Sz)^k
                \end{pmatrix}
            \end{eqnarray*}
        and anticipate that we will use Corollary \ref{cor:convolvUnderS}. Indeed, applying Corollary \ref{cor:convolvUnderS} we see \eqref{eq:FUnderS} holds as we wanted. 
    \end{proof}

\section{The proof, part II: $ \mathbf{F}_U(z) \rightarrow U(z) $}
Having determined th multipliers $\varepsilon_r(T) $ and $ \varepsilon_r(S) $ in the forward direction, we will use these now to establish the bijection. In this section, finding that
    \begin{eqnarray*}
        \varpi_\mu = 2 \cos \frac{\pi}{\mu} =  e^{\frac{\pi i }{\mu}} + {e^{\frac{-\pi i }{\mu}}}
    \end{eqnarray*}
holds, we will write $ T $ in terms of the latter expression.

\begin{thm}
    Let $ \mu \geq 3 $ be an integer, let $\zeta$ denote a primitive $2\mu$-th root of unity, and let $ K =\QQ(\zeta)$. Then for all $r\geq 0$ there exists a representation $\varepsilon_r \colon \mathfrak{t}_\mu \to \GL_{r+1}(K) $    
    if
        \begin{eqnarray*}
            \varepsilon_r(u) = 
            \begin{cases}
                \exp((\zeta+\zeta^{-1})A_r) \quad & \text{if } u = T, \\
                d_r^{\text{y}}((-1)^j) & \text{if }u = S.
            \end{cases}
        \end{eqnarray*}
    holds. In fact, for $ r\geq 1 $ we have
        \begin{eqnarray*}
            \varepsilon_r \cong \Sym^{r}\varepsilon_1
        \end{eqnarray*}
    is true. 
\end{thm}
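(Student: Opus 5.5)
The plan is to avoid checking the relations $S^2=(TS)^\mu=1$ by brute force on $(r+1)\times(r+1)$ matrices. Instead I would first build $\varepsilon_1$ as an honest representation of the matrix group $\mathfrak{t}_\mu\subset\mathbf{SL}_2(\mathbb{R})$, and then obtain every $\varepsilon_r$ as $\Sym^r\varepsilon_1$ written in a well-chosen basis. Both the existence claim and the isomorphism $\varepsilon_r\cong\Sym^r\varepsilon_1$ then follow at once.

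\textbf{Step 1 ($r=1$).} From Definition \ref{def:creationMatrix}, $A_1=\begin{pmatrix}0&0\\1&0\end{pmatrix}$, so with $\varpi_\mu=\zeta+\zeta^{-1}$ we get
\begin{eqnarray*}
\varepsilon_1(T)=\begin{pmatrix}1&0\\ \varpi_\mu&1\end{pmatrix}, \qquad \varepsilon_1(S)=d_1(a_j)\,\iota_1=\begin{pmatrix}0&\pm1\\ \mp1&0\end{pmatrix},
\end{eqnarray*}
the sign depending on the convention for $d^{\mathbf{y}}$. Put $D=\mathrm{diag}(1,-1)$ and compare with the generators \eqref{eq:HeckeGenerators}. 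One checks that $\gamma\mapsto D\,(\gamma^{-1})^{\mathbf{t}}\,D$ sends $T$ to $\varepsilon_1(T)$ and $S$ to $\begin{pmatrix}0&1\\-1&0\end{pmatrix}$. This map is the contragredient of the defining representation, twisted by conjugation, so it is a group homomorphism on $\mathfrak{t}_\mu$ with no relation-checking at all.

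Its values lie in $\GL_2(\QQ(\varpi_\mu))\subset\GL_2(K)$. If the stated sign of $\varepsilon_1(S)$ is the opposite one, I would conjugate by $D$ or use $\gamma\mapsto\gamma^{-\mathbf{t}}$ directly; the two choices differ by an inner automorphism, so this does not affect the claim.

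\textbf{Step 2 (symmetric powers).} Let $x,y$ be the standard basis of $K^2$. On $\Sym^r K^2$ use the rescaled monomial basis
\begin{eqnarray*}
e_j=c_j\,x^{r-j}y^j, \qquad c_j=\frac{j!\,(r-j)!}{r!},
\end{eqnarray*}
and note that $c_j=c_{r-j}$.

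For $T$: since $\varepsilon_1(T)=\exp(\varpi_\mu N)$ with $N$ the nilpotent map $x\mapsto y$, we have $\Sym^r\varepsilon_1(T)=\exp(\varpi_\mu N_r)$, where $N_r$ is the derivation induced by $N$. It acts by $N_r(x^{r-j}y^j)=(r-j)\,x^{r-j-1}y^{j+1}$, hence $N_r e_j=(r-j)\frac{c_j}{c_{j+1}}e_{j+1}=(j+1)e_{j+1}$. So in the basis $(e_j)$, $N_r=A_r$, and $\Sym^r\varepsilon_1(T)=\exp(\varpi_\mu A_r)=\varepsilon_r(T)$, matching Lemma \ref{lem:pascalExp}.

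For $S$: $\varepsilon_1(S)$ sends $x\mapsto\mp y$ and $y\mapsto\pm x$. Therefore $x^{r-j}y^j\mapsto(\pm1)^{j}(\mp1)^{r-j}x^jy^{r-j}$, and because $c_j=c_{r-j}$ this gives $e_j\mapsto\pm(-1)^{j}e_{r-j}$ up to an overall sign $(\pm1)^r$. That is a diagonal-times-exchange matrix $d_r(\cdot)\,\iota_r$, i.e. of the form $d_r^{\mathbf{y}}((-1)^j)$.

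\textbf{Step 3 (conclusion).} Step 2 shows that the pair $(\varepsilon_r(T),\varepsilon_r(S))$ is the image of $(T,S)$ under the homomorphism $\Sym^r\varepsilon_1$, read in the basis $(e_j)$. Hence it extends to a representation $\varepsilon_r:\mathfrak{t}_\mu\to\GL_{r+1}(K)$, and the change of basis $x^{r-j}y^j\mapsto e_j$ is itself the isomorphism $\varepsilon_r\cong\Sym^r\varepsilon_1$. Every step uses only rational scalars together with $\varpi_\mu\in\QQ(\zeta)$, so the representation is defined over $K$ as claimed.

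\textbf{Main obstacle.} I expect the delicate part to be the sign bookkeeping in Step 2 for $S$. One must check that the global sign $(\pm1)^r$ and the alternating pattern agree exactly with $d_r^{\mathbf{y}}((-1)^j)$ as stated. This differs from the $(-1)^{r-j}$ appearing in Theorem \ref{thm:vectorFormUnderS}, but only by the scalar $(-1)^r$, or by conjugating with $d_r((-1)^j)$, which commutes with the structure of $A_r$ up to $A_r\mapsto -A_r$.

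A related point is that $\varepsilon_r(S)^2=(-1)^rI$. So for odd $r$ the relation $S^2=1$ holds only in the sense appropriate to the matrix group, where $S^2=-I$ in $\mathbf{SL}_2$. I would state explicitly that $\mathfrak{t}_\mu$ is treated as the matrix group generated by \eqref{eq:HeckeGenerators}; that is exactly why the construction routes through $\varepsilon_1$ rather than through the abstract presentation. If a twist is needed to match signs, I would try conjugating by $d_r((-1)^j)$ first.
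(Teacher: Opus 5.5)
\textbf{Comparison with the paper.} Your base case is a genuinely different argument from the paper's. The paper checks the $r=1$ case by hand. It computes the characteristic polynomial $X^2-(\zeta+\zeta^{-1})X+1$ of $\varepsilon_1(S^{-1}T)$ and concludes that its $\mu$-th power is $-I=\varepsilon_1(S)^2$. For $r\ge 2$ it then expands $(e+(\zeta+\zeta^{-1})f)^k f^{r-k}$ binomially.

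You instead observe that $\varepsilon_1$ is the defining representation in disguise: for $\gamma\in\mathbf{SL}_2$ one has $D(\gamma^{-1})^{\mathbf t}D=\iota_1\gamma\iota_1$. So no presentation of $\mathfrak{t}_\mu$ is needed. You also replace the binomial expansion by $\Sym^r\exp(\varpi N)=\exp(\varpi N_r)$.

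Your route buys robustness. The paper's check actually uses $\varepsilon_1(S)=\left(\begin{smallmatrix}0&-1\\1&0\end{smallmatrix}\right)$, which has the opposite sign to $d_1^{\mathbf y}((-1)^j)$. Moreover, the relation it verifies, $(S^{-1}T)^\mu=S^2$, is false in the matrix group when $\mu$ is odd. Your construction cannot make this kind of error. Your remark that $\mathfrak{t}_\mu$ must be read as the matrix group, because $\varepsilon_r(S)^2=(-1)^rI$, is also correct.

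The paper's eigenvalue argument buys uniformity in $\zeta$. Your identification $\varpi_\mu=\zeta+\zeta^{-1}$ holds only for $\zeta=e^{\pm\pi i/\mu}$. For a general primitive $2\mu$-th root, let $\sigma$ be the automorphism of $K$ with $\sigma(e^{\pi i/\mu})=\zeta$. Since $\mathfrak{t}_\mu\subset\mathbf{SL}_2(\ZZ[\varpi_\mu])$, the entrywise map $\gamma\mapsto\sigma(\iota_1\gamma\iota_1)$ is still a homomorphism. It sends $T\mapsto\exp((\zeta+\zeta^{-1})A_1)$ and $S\mapsto\left(\begin{smallmatrix}0&1\\-1&0\end{smallmatrix}\right)$, which is what the statement requires.

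\textbf{An error in Step 2.} Your normalization is inverted. With $c_j=j!(r-j)!/r!$ you get $c_j/c_{j+1}=(r-j)/(j+1)$, so $N_re_j=\frac{(r-j)^2}{j+1}e_{j+1}$, not $(j+1)e_{j+1}$. For example, when $r=2$ you get $x^2\mapsto e_0+4\varpi_\mu e_1+\varpi_\mu^2e_2$ instead of $e_0+\varpi_\mu e_1+\varpi_\mu^2e_2$.

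What you need is $c_{j+1}/c_j=(r-j)/(j+1)$, that is, $c_j=\binom{r}{j}$. This is the paper's basis $w_k$, up to the scalar $r!$ and with the order reversed. Since $\binom{r}{j}=\binom{r}{r-j}$, your $S$-computation goes through unchanged.

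\textbf{Signs.} Commit to the sign rather than hedging. With the paper's convention $M^{\mathbf y}=M\iota_r$, one gets $d_1^{\mathbf y}((-1)^j)=\left(\begin{smallmatrix}0&1\\-1&0\end{smallmatrix}\right)$, which is exactly what your Step 1 produces. Thus $x\mapsto -y$ and $y\mapsto x$, so $e_j\mapsto(-1)^{r-j}e_{r-j}$. The matrix of this map is $d_r((-1)^j)\iota_r=d_r^{\mathbf y}((-1)^j)$ on the nose, with no overall sign.

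Your fallbacks are not harmless and should be deleted:
\begin{itemize}
\item The pair $\bigl(\varepsilon_1(T),\left(\begin{smallmatrix}0&-1\\1&0\end{smallmatrix}\right)\bigr)$ is not conjugate to yours. For odd $\mu$ it is not even a representation: it sends the equal elements $(TS)^\mu$ and $S^2$ to $I$ and $-I$ respectively.
\item The map $\gamma\mapsto(\gamma^{-1})^{\mathbf t}$ sends $T$ to $\exp(-\varpi_\mu A_1)$, not to $\varepsilon_1(T)$.
\item Conjugating by $d_r((-1)^j)$ sends $A_r\mapsto -A_r$, so it replaces $\varepsilon_r(T)$ by its inverse.
\end{itemize}
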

\begin{proof}
    Utilizing $ \varepsilon_r$ we will induct over $r$.  
    
    When $r=0$ this is the trivial representation. 
    
    Now set $r=1$, so that $\varepsilon_1(S)^2 = -I$ and from which we find
    \[
    \varepsilon_1(T) = \exp\!\left((\zeta+\zeta^{-1})\left(\begin{smallmatrix}
        0&0\\
        1&0
    \end{smallmatrix}\right)\right) = \left(\begin{matrix}
        1&0\\\
        \zeta+\zeta^{-1}&1
    \end{matrix}\right),
    \]
    holds. Thus,
    \[
    \varepsilon_1(R) = \varepsilon_1(S^{-1})\varepsilon_1(T) = \left(\begin{matrix}
        0&1\\
        -1&0
    \end{matrix}\right)\left(\begin{matrix}
        1&0\\\
        \zeta+\zeta^{-1}&1
    \end{matrix}\right) = \left(\begin{matrix}
        \zeta+\zeta^{-1}&1\\
        -1&0
    \end{matrix}\right)
    \]
    is satisfied. We see that the characteristic polynomial of $\varepsilon_1(R)$ is 
        \begin{eqnarray*}
            X^2-(\zeta+\zeta^{-1})X+1
        \end{eqnarray*}
    so that $\varepsilon_1(R)$ is diagonalizable with eigenvalues $\zeta$ and $\zeta^{-1}$. Since $ \zeta^\mu =\zeta^{-\mu}=-1$, we find that $\varepsilon_1(R)^\mu$ is diagonalizable with both eigenvalues equal to $-1$, and hence $\varepsilon_1(R)^\mu = -I = \varepsilon_1(S)^2$. This proves the claim when $r=1$.

    Now for $r\geq 2$ it suffices to show that there exists a basis for $\Sym^r\varepsilon_1$ such that $ S$ and $ T $ act via the matrices $\varepsilon_r(S)$ and $\varepsilon_r(T)$; the identity $\varepsilon_r(R)^\mu = \varepsilon_r(S)^2$ then follows immediately from $\Sym^r\varepsilon_1 $ being a group homomorphism and the $r=1$ case. 
    
    Recall that if we let the standard basis for $\Sym^1\varepsilon_1 = \varepsilon_1$ be $e$ and $f$ where
        \begin{eqnarray*}
            e := \left(
                \begin{matrix}
                    1\\0
                \end{matrix}\right) 
            \quad \text{and} \quad
            f := \left(
                \begin{matrix}
                    0\\1
                \end{matrix}\right),
    \end{eqnarray*}
    then the vector space underlying $\Sym^r\varepsilon_1$ is spanned by monomials $e^if^j$ with $i+j=r$, and $\gamma\in \mathfrak{t}_\mu $ acts by
        \begin{equation}\label{eq:sympower}
            \left(\Sym^r\varepsilon_1(\gamma)\right) \cdot e^if^j = (\varepsilon_1(\gamma)e)^i(\varepsilon_1(\gamma)f)^j.    
        \end{equation}
    Now consider the rescaled basis $ w_k = \frac{e^k f^{r-k}}{k!\,(r-k)!}$ for $ k=0,1,\ldots,r $.

    With respect to our alternative basis, we now describe the {action of $ S $.} Since $ \varepsilon_1(S)\cdot e = f $ and $ \varepsilon_1(S) \cdot f = -e $, equation \eqref{eq:sympower} gives
        \begin{eqnarray*}
            \left(\Sym^r\varepsilon_1(S)\right)\cdot w_k = \frac{f^k\cdot(-e)^{r-k}}{k!\,(r-k)!} = (-1)^{r-k}\,w_{r-k}.
        \end{eqnarray*}
    Hence in the basis $(w_0, w_1, \ldots, w_r)$ the matrix of $\Sym^r\varepsilon_1(S)$ is anti-diagonal with $(k,r-k)$-entry equal to $(-1)^{r-k}$. The upper-right entry (at position $(0,r)$) equals $(-1)^r$, and the signs alternate along the anti-diagonal, exactly as described for $\varepsilon_r(S)$.

    As for the action of $ T $, since $\varepsilon_1(T)\cdot e = e + (\zeta+\zeta^{-1})f$ and $\varepsilon_1(T)\cdot f = f$, equation \eqref{eq:sympower} gives
    \begin{align*}
    \left(\Sym^r\varepsilon_1(T)\right)\cdot w_k
    &= \frac{\bigl(e+(\zeta+\zeta^{-1})f\bigr)^k f^{r-k}}{k!\,(r-k)!} \\
    &= \frac{1}{k!\,(r-k)!}\sum_{m=0}^{k}\binom{k}{m}(\zeta+\zeta^{-1})^m e^{k-m}f^{r-k+m}.
    \end{align*}
    Setting $\ell = k-m$ (so the monomial is $e^\ell f^{r-\ell}$), this becomes
    \[
    \sum_{\ell=0}^{k} \frac{(\zeta+\zeta^{-1})^{k-\ell}}{(k-\ell)!\,(r-k)!}\cdot \frac{e^\ell f^{r-\ell}}{\ell!\,(r-\ell)!}\cdot \ell!\,(r-\ell)!
    = \sum_{\ell=0}^{k} \frac{\ell!\,(\zeta+\zeta^{-1})^{k-\ell}}{\ell!\,(k-\ell)!} \cdot \frac{(r-\ell)!}{(r-k)!} \cdot w_\ell,
    \]
    which simplifies to
    \[
    \left(\Sym^r\varepsilon_1(T)\right)\cdot w_k = \sum_{\ell=0}^{k} \frac{\ell!\,(\zeta+\zeta^{-1})^{k-\ell}}{(k-\ell)!}\, w_\ell.
    \]
    Thus, in the basis $(w_k)$, the $(\ell,k)$-entry of $\Sym^r\varepsilon_1(T)$ (for $\ell \leq k$) is
    \[
    \frac{\ell!\,(\zeta+\zeta^{-1})^{k-\ell}}{(k-\ell)!}.
    \]
    On the other hand, since $(A_r)_{i,j}= i\,\delta_{i,j+1}$, we compute $(A_r^m)_{i,j} = \frac{i!}{(i-m)!}\,\delta_{i,j+m}$, and hence
    \[
    \bigl[\exp\!\left((\zeta+\zeta^{-1})A_r\right)\bigr]_{\ell,k} = \frac{(\zeta+\zeta^{-1})^{\ell-k}}{(\ell-k)!}\cdot\frac{\ell!}{k!}\Bigg|_{\substack{\text{lower}\\\text{triangular}}}
    \]
    Thus, $A_r$ is sub-diagonal and so $\exp(A_r)$ is lower-triangular with $(\ell,k)$-entry (for $\ell \geq k$) equal to
    \[
    \frac{(\zeta+\zeta^{-1})^{\ell-k}}{(\ell-k)!}\cdot\frac{\ell!}{k!}.
    \]
    Renaming $(\ell, k) \mapsto (k, \ell)$ to match the $\Sym^r$ computation above, we see the two expressions agree:
    \[
    \frac{\ell!\,(\zeta+\zeta^{-1})^{k-\ell}}{(k-\ell)!\,} = \frac{(\zeta+\zeta^{-1})^{k-\ell}}{(k-\ell)!}\cdot \ell!,
    \]
    This shows that in the $w$-basis the $(\ell, k)$-entry of $\Sym^r\varepsilon_1(T)$ equals the $(\ell,k)$-entry of $\exp((\zeta+\zeta^{-1})A_r)$, completing the verification that $\varepsilon_r(T) = \Sym^r\varepsilon_1(T)$ in the $w$-basis. This completes the proof.
\end{proof}

\printbibliography

\end{document}